\crefname{hypothesis}{Hypothesis}{Hypotheses}
\title{Multiple solutions and their asymptotics for laminar flows through a porous channel with different permeabilities\thanks{
\funding{This work is funded by the National Natural Science Foundation of China (No. 91430106, No.11771040, No.11371128), the Fundamental Research Funds for the Central Universities (No. 06500073) and by NSF grant DMS-1601885.}}}
\author {Hongxia Guo\thanks{Department  of Applied Mathematics, University of Science and Technology, Beijing, 100083, China 
  (\email{g1214619441@163.com}).}
  \and Changfeng Gui\thanks{Department of Mathematics, University of Texas at San Antonio, San Antonio, TX 78249, USA; Institute of Mathematics, Hunan University, Changsha, 410082, China
  (\email{changfeng.gui@utsa.edu}).}
\and Ping Lin\thanks{Division of Mathematics, University of Dundee, Dundee,  DD1 4HN, United Kingdom;  Department  of Applied Mathematics, University of Science and Technology, Beijing, 100083, China 
 (\email{plin@maths.dundee.ac.uk}).}
\and Mingfeng Zhao\thanks{Center for Applied Mathematics, Tianjin University, Tianjin, 300072, China 
  (\email{mingfeng.zhao@tju.edu.cn}).}}
\begin{document}

\maketitle

\begin{abstract}
The existence and multiplicity of similarity solutions for the steady, incompressible and fully developed laminar flows in a uniformly porous channel with two permeable walls are investigated. We shall focus on the so-called asymmetric case where the upper wall is with an amount of flow injection and the lower wall with a different amount of suction. We show that there exist three solutions designated  as type $I$,  type $II$ and type $III$  for the asymmetric case. The numerical results suggest that a unique solution exists  for the Reynolds number $0\leq R<14.10$  and two  additional solutions appear for  $R>14.10$. The corresponding asymptotic solution for each of the multiple solutions  is  constructed by the method of boundary layer correction or matched asymptotic expansion for the most difficult high Reynolds number case. Asymptotic solutions are all verified by  their corresponding numerical solutions. 
\end{abstract}

\begin{keywords}
   Navier-Stokes equation,  multiple solutions, laminar flow, similarity solution, asymmetric flow
\end{keywords}

\begin{AMS}
  34B15, 34E10, 76D03, 76M45
\end{AMS}

\section{Introduction}
Laminar flows in various geometries with porous walls are of fundamental importance to biological organisms, contaminant transports in aquifers and fractures, air circulation in the respiratory system, membrane filtration, control of boundary layer separation, automotive filters, etc. Hence, laminar flows through permeable walls have been extensively studied by researchers  during the past several decades.
The analysis for Navier-Stokes equation which describes the two-dimensional steady laminar flows of a viscous incompressible fluid through a porous channel with uniform injection or suction was initiated by Berman \cite{berman1953laminar}.  He assumed that  the flow is symmetric about the centre line of the channel and  is of similarity form, and reduced the problem to a  fourth order nonlinear ordinary differential equation with four boundary conditions and a cross-flow Reynolds number $R$. He also gave an asymptotic solution for small Reynolds number. Then, numerous studies have been done about the laminar flows in a channel or tube  with permeable walls. Yuan \cite{yuan1956further}, Terrill and Shrestha \cite{terrill1965laminar} and Sellars \cite{sellars1955laminar} obtained an asymptotic solution for the large injection and large suction  cases, respectively. Terrill \cite{terrill1965laminar, terrill1964laminar} and Sherstha \cite{sherstha1967singular}  derived a series of asymptotic solutions using the method of matched asymptotic expansion for the large injection and large suction cases with a transverse magnetic field. 

All these works mentioned above had produced only one solution for each value of $R$.  Raithby \cite{raithby1971laminar}  was the first to find that there is a second solution for  values of $R>12$ in a  numerical investigation of the  flow in a channel  with heat transfer.  Then, Some  studies were also devoted to the analysis of multiple solutions  for the symmetric  porous channel flow problem. Robinson\cite{robinson1976existence}  conjectured that there are three types of solutions  which were classified as type $I$, type $II$ and type  $III$. His conclusion was based on the numerical solutions and he also derived the asymptotic solutions of types $I$ and $II$ for the large suction case. Zarturska et al \cite{zaturska1988flow} and Cox and King \cite{cox1997asymptotic} also analyzed  multiple solutions for the flow in a channel with porous walls.  Lu et al \cite{lu1992asymptotic}  investigated the asymptotic behaviour of the solutions. Lu and Macgillivray \cite{lu1997asymptotic, lu1999uniqueness, lu1999matched, macgillivray1994asymptotic}  mainly obtained the type $III$ solution. Brady and  Acrivos \cite{brady1981steady}  presented three  solutions for the flow in a channel or tube with an accelerating surface velocity. 

As mentioned above, the evidence for multiple solutions was either numerical or asymptotic. Shih \cite{shih1987existence} proved theoretically, applying a fixed point theorem, that there exists only one solution for  injection case for the flow in a channel with porous walls. Topological and shooting methods were used by Hastings et al \cite{hastings1992boundary} to prove the existence of all three of Robinson's conjectured solutions. He also presented the asymptotic behaviour for the flow as $\vert R\vert\rightarrow\infty$. Terrill \cite{terrill1965laminarinjection} proposed a transformation to convert the two-point boundary value problem  into an initial value problem to facilitate the numerical calculation of solutions for an arbitrary  Reynolds number. Based on the transformation proposed by Terrill, Skalak and Wang \cite{skalak1978nonunique} described analytically the number and character of the solutions  for each given Reynolds number under fairly general assumptions for  the symmetrical channel and tube flow. The similar method was used by Cox \cite{cox1991analysis} to analyze the symmetric solutions when the two walls are accelerating equally and when  one wall is accelerating and the other is stationary. The uniqueness of similarity solution was investigated theoretically  by Chenllam  and Liu \cite{chellam2006effect} and their  work mainly considered the symmetric flow in a channel with slip boundary conditions.

All studies mentioned above are for symmetrical flows. The class of asymmetrical flows which may be driven by imposing different  velocities on the walls  turn out to be very interesting. The study of  asymmetric laminar steady flow  may be traced back to Proudman \cite{proudman1960example} who  obtained the asymptotic solutions in the core area.  Then, Terrill and Shrestha \cite{terrill1965laminardiff, terrill1966laminar, shrestha1968laminar} extended Proudman$'$work and constructed  one asymptotic solution using the method of matched asymptotic expansion for the large injection, large suction and mixed cases, respectively. Here the mixed cases means that one wall is with injection while the other is with suction.  Cox \cite{cox1991two} considered the practical case of an impermeable wall opposing a transpiring wall. Waston et al \cite{watson1991laminar}  also investigated the case of asymmetrical flow in a channel which one wall is stationary and the other is accelerating.

The purpose of this paper is not to reconsider any of these previously considered problems, but instead to provide a thorough analysis for the asymmetric flow in a channel of porous walls  with different permeabilities, where the upper wall is with injection and the lower  wall is with suction. We will show that there exist three multiple solutions in this asymmetric case. We also mark them as type $I$, type $II$ and type $III$ solutions as people do for the symmetric case. We should remark here that type $I$, type $II$ and type $III$ solutions for the asymmetric case are much different from those for the symmetric case. We will numerically give the range of the Reynolds number where there exist three solutions. We will then construct asymptotic solutions for each solution for the most difficult case of high Reynolds number and numerically validate the constructed solutions. The paper is organized as follows.  In \cref{sec:model},   a similarity transformation is introduced and  the Navier -Stokes equation is reduced to a single fourth order nonlinear ordinary differential equation with a Reynolds number $R$ and four boundary conditions. In \cref{sec:existence}, we  theoretically analyze that there exist three solutions of similarity transformed equation under fairly general assumptions. In \cref{num sol}, we compute the multiple solutions numerically. We also  sketches  velocity profiles and streamlines for these asymmetric flows. In \cref{asy sol}, for the most difficult high Reynolds number case,  the asymptotic solution for each type  of multiple  solutions will be constructed using the method of boundary layer correction or matched asymptotic expansion.  In \cref{compa},  all the asymptotic solutions are verified by numeral solutions and meanwhile these asymptotic solutions may serve as a validation for the numerical method used in the paper. 

\section{Mathematical formulation}
\label{sec:model}

We consider the two-dimensional, viscous, incompressible asymmetric  laminar flows  in a porous and  elongated rectangular channel. The channel exhibits  a sufficiently small depth-width ratio of semi-height $h$  to  length $L$.  Despite the channel's finite body length, it is reasonable to assume a  semi-infinite length in order to neglect the influence of the opening at the end \cite{uchida1977unsteady}.  The flow is driven by uniform injection through the upper wall of the channel  with speed $-v_{2}$ and suction through the lower  wall with speed $-v_{1}$,  where $v_2>v_1>0$. We define an asymmetric parameter  $a=\frac{v_{1}}{v_{2}}$, where $0<a<1$. With $\tilde{x}$ representing the streamwise direction and $\tilde{y}$ the transverse direction, the corresponding streamwise and transverse velocity components are defined as $u$ and $v$, respectively.  The streamwise velocity is zero at the closed headend $(\tilde{x}=0)$.  numerical

The equations of the continuity and momentum  for the steady laminar flows of an incompressible viscous fluid through a porous channel  are \cite{terrill1966laminar}
\begin{align}
\nabla\cdot \textit{\textbf{V}}&=0,   \label{continuity}\\
(\textit{\textbf{V}} \cdot \nabla)\textit{\textbf{V}} &= -\frac{1}{\rho} \textrm{ } \nabla \textrm{ } p+ \nu \nabla^2 \textit{\textbf{V}},  \label{momentum}
\end{align}
where the symbol $ \textit{\textbf{V}}=(u, v)$   represents the velocity vector,  $p$ the pressure, $\rho$ the density and $\nu$ the viscosity of the fluid.
The boundary conditions necessary for describing the asymmetric flow and solving the continuity and momentum equations are  
\begin{equation}
u(\tilde{x}, -h) =0,  \  v(\tilde{x}, -h )=- v_1,  \ u(\tilde{x}, h ) =0,   \   v(\tilde{x}, h ) =- v_2.    \label{pdebc}
\end{equation}

As we know, the  study of the fluid flow equations with a high Reynolds number is most challenging.  A similarity transformation provides a way to explicitly express the solutions for the channel flow. Such  explicit  solutions are often preferred in understanding the fluid properties especially the boundary layers. For this purpose we introduce a streamfunction and express the velocity components in terms of the streamfunction \cite{majdalani2003moderate}:
\begin{equation}
\phi =\frac{\nu \tilde{x}}{h}F(y), \nonumber
\end{equation}
where $y=\frac{\tilde{y}}{h}$ which is the non-dimensional transverse coordinate and $F(y)$ is independent to the streamwise coordinate.
Then the velocity components are given by 
\begin{equation}
u=\frac{\partial \phi}{\partial \tilde{y}}=\frac{\nu \tilde{x}}{h^2}F'(y), \quad v=-\frac{\partial \phi}{\partial \tilde{x}}=-\frac{\nu}{h}F(y), \label{velocity}
\end{equation}
so that the continuity equation (\ref{continuity}) is  satisfied.
Substituting (\ref{velocity}) into equations  (\ref{continuity}) and (\ref{momentum}) and eliminating the pressure term,  a third order nonlinear ordinary differential equation with a parameter and an integration constant is developed.  It is 
\begin{equation}
f'''+ R(ff''-f'^2)=K, \label{ode1}
\end{equation}
where  $K$ is an integration constant, $R=\frac{hv_2}{\nu}$ is the Reynolds number based on the fluid velocity $v_2$  through the upper wall and the semi-height $h$ of channel and $f=\frac{F}{R}$.
The boundary conditions given by (\ref{pdebc})  can now be updated to the normalized  form 
\begin{equation}
f(-1)=a, \  f'(-1)=0, \  f(1)=1, \  f'(1)=0. \label{bc}
\end{equation}

\section{Existence of multiple solutions}
\label{sec:existence}

Skalak \cite{skalak1978nonunique}, Cox \cite{cox1991analysis} and Chenllam \cite{chellam2006effect} considered symmetric flow in a channel with porous walls, accelerating walls  and slip boundary conditions, respectively.  
In this section, we extend previous analysis \cite{skalak1978nonunique, cox1991analysis, chellam2006effect} to investigate asymmetric  flow in the channel with porous walls  and to discuss  the existence  of multiple solutions. 
 
The two-point boundary value problem,  (\ref{ode1}) and (\ref{bc}),  can be converted into an initial value problem.
Rescalling (\ref{ode1}) and (\ref{bc}) by introducing $f(y)=\frac{1}{2}bg(\xi)/R$ and $\xi=\frac{1}{2}b(y+1)$\cite{terrill1965laminarinjection} \cite{skalak1978nonunique}:
\begin{equation}
g'''+ gg''-g'^2=k, \label{eqn:3-1}
\end{equation}
\begin{equation}
g(0)=\frac{2aR}{b}, \ g'(0)=0, \ g(b)=\frac{2R}{b}, \  g'(b)=0,
\end{equation}
where $k=\frac{16RK}{b^4}$, $a>0$,  $b>0$ and $R>0$.
Assume all the initial conditions of (\ref{eqn:3-1}) can be
\begin{equation}
g(0)=2 a R/b,\quad g'(0)=0, \quad g''(0)=A, \quad g'''(0)=B,  \label{eqn:3-3}
\end{equation}
where $A,B\neq0$. 
If we find a  point $\xi^*$ such that $g'(\xi)=0$, we obtain $b$ by setting $\xi^*=b$ and the value of $g(b)$ at this point gives the Reynolds  number $R=\frac{1}{2}bg(b)$. Then, we can obtain the original solution $f(y)$. Since our analysis is restricted to positive $R$,  there must be $g(b)>0$ (i.e. $g(\xi^*)>0$). Thus,  we will discuss analytically the number of possible roots $\xi^*$ of $g'(\xi)$ (each with $g(\xi^*)>0$) covering the entire range of $\xi>0$.

Let $h_1(\xi)=g'(\xi)$, $h_2(\xi)=g''(\xi)$, $h_3(\xi)=h'''(\xi)$, $h_4(\xi)=g^{(4)}(\xi)$ and $h_5(\xi)=g^{(5)}(\xi)$ for all $\xi\geq0$. 
Differenting equation (\ref{eqn:3-1}) twice gives
\begin{align}
&h_4=h_1h_2-gh_3, \label{eqn:3-4}\\
&h_5+gh_4=(h_2)^2.  \label{eqn:3-5}
\end{align}

\begin{lemma}\label{lem:3-1}
       If $B<0$, then $g^{(4)}(\xi)>0$ for all $\xi\geq0$. In particular,  $g'''(\xi)$ is a strictly increasing function of $\xi$ on $[0,+\infty)$.	
\end{lemma}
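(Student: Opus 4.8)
The plan is to exploit the linear first-order structure that the twice-differentiated equation induces on $h_4=g^{(4)}$. First I would evaluate $h_4$ at the origin from \eqref{eqn:3-4} and the initial data \eqref{eqn:3-3}: since $h_1(0)=g'(0)=0$,
\[
h_4(0)=h_1(0)h_2(0)-g(0)h_3(0)=-\frac{2aR}{b}\,B ,
\]
and because $a,b,R>0$ while $B<0$, this gives $h_4(0)>0$. (Note only $B<0$ and $g(0)>0$ enter here; the hypothesis $A\neq 0$ is not needed for this statement.)

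Next I would combine \eqref{eqn:3-4} and \eqref{eqn:3-5}. Reading off \eqref{eqn:3-5}, we have $h_4'=h_5=(h_2)^2-g\,h_4$, i.e.
\[
h_4'(\xi)+g(\xi)\,h_4(\xi)=\bigl(g''(\xi)\bigr)^2\ge 0\qquad\text{for all }\xi\ge 0 .
\]
This is a linear ODE for $h_4$ with coefficient $g$ and nonnegative forcing. Introducing the integrating factor $\mu(\xi)=\exp\!\bigl(\int_0^{\xi}g(s)\,ds\bigr)$, which is strictly positive irrespective of the sign of $g$, the equation becomes $(\mu h_4)'=\mu\,(g'')^2\ge 0$, and integrating from $0$ to $\xi$ yields
\[
\mu(\xi)\,h_4(\xi)=h_4(0)+\int_0^{\xi}\mu(s)\,\bigl(g''(s)\bigr)^2\,ds .
\]
The right-hand side is strictly positive since $h_4(0)>0$ and the integrand is nonnegative, and $\mu(\xi)>0$; hence $g^{(4)}(\xi)=h_4(\xi)>0$ for all $\xi\ge 0$. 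Finally, $\tfrac{d}{d\xi}g'''(\xi)=g^{(4)}(\xi)>0$ gives that $g'''$ is strictly increasing on $[0,+\infty)$.

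There is essentially no obstacle once one spots the integrating-factor trick; the only point needing care is that the argument presumes the solution of \eqref{eqn:3-1} with data \eqref{eqn:3-3} is defined on all of $[0,+\infty)$ (otherwise the identity above holds on the maximal interval of existence and already yields positivity there). An alternative, if one prefers to avoid integrating factors, is a first-zero argument: if $\xi_0$ were the first point with $h_4(\xi_0)=0$, then \eqref{eqn:3-5} forces $h_4'(\xi_0)=h_5(\xi_0)=\bigl(g''(\xi_0)\bigr)^2\ge 0$, which contradicts $h_4$ decreasing from a positive value down to $0$ unless $g''(\xi_0)=0$, and that degenerate case is then ruled out using \eqref{eqn:3-4} again; the integrating-factor version simply packages all of this at once.
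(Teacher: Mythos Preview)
Your proof is correct and takes essentially the same approach as the paper: compute $h_4(0)=-g(0)B>0$ from \eqref{eqn:3-4} and the initial data, then apply the integrating factor $e^{\int_0^\xi g(t)\,dt}$ to the inequality $h_4'+gh_4\ge 0$ coming from \eqref{eqn:3-5} to conclude $h_4(\xi)>0$. Your write-up is slightly more explicit in displaying the integrated identity and in noting that only $g(0)>0$ and $B<0$ are used, but the argument is the same as the paper's.
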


\begin{proof}
	By equation \eqref{eqn:3-4}, since $g'(0)=0$, then $h_4(0)=g'(0)g''(0)-g(0)h_3(0)=-g(0)B$. Since $g(0)>0$ and $B<0$, then $h_4(0)>0$.  By equation \eqref{eqn:3-5}, then $h_4'+gh_4\geq0$ for all $\xi\geq0$, that is, $\displaystyle \left(e^{\int_0^\xi g(t)dt}h_4(\xi)\right)'\geq0$ for all $\xi\geq0$, which implies that $e^{\int_0^\xi g(t)dt}h_4(\xi)\geq h_4(0)>0$ for all $\xi\geq0$. Hence $h_4(\xi)=g^{(4)}(\xi)>0$ for all $\xi\geq0$.	
\end{proof}

\begin{remark}
The proof of \cref{lem:3-1} only uses the condition $g(0)B<0$.	
\end{remark}

\begin{proposition}\label{prop:3-1}
Assume that $A>0$ and $B<0$.
	\begin{itemize}	
\item[(a)] If there exists some $x_0>0$ such that $g'''(x_0)=0$, then there is no point $\zeta>x_{0}$ such that $g'(\zeta)=0$ and $g(\zeta)>0$.
\item[(b)]  If  $g'''(\xi)<0$ for all $\xi\geq0$ and $h_2(\alpha)=0$ for some $\alpha>0$, then there exists some $\zeta>\alpha$ such that $g'(\zeta)=0$ and $g(\zeta)>0$.
       \end{itemize}	
\end{proposition}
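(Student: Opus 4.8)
The plan is to prove both parts by tracking the signs of $g',g'',g'''$ on $[0,\infty)$ and by evaluating the equation \eqref{eqn:3-1} (and its once–differentiated form \eqref{eqn:3-4}) at the distinguished points where $g''$ or $g'$ vanishes. The one substantial ingredient is \cref{lem:3-1}: since $B<0$, $g^{(4)}>0$ everywhere, so $g'''$ is strictly increasing on $[0,\infty)$.

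I would dispose of part (b) first, since it is the short one. Here $g'''<0$ on $[0,\infty)$ forces $g''$ to be strictly decreasing; with $g''(0)=A>0$ and $g''(\alpha)=0$ this gives $g''>0$ on $[0,\alpha)$ and $g''<0$ on $(\alpha,\infty)$. Hence $g'$ rises from $g'(0)=0$ to $g'(\alpha)>0$ and then strictly decreases, and a linear lower bound on $-g''$ past $\alpha$ shows $g'\to-\infty$; thus $g'$ has a zero $\zeta>\alpha$. Since $g'>0$ throughout $(0,\zeta)$, the function $g$ is strictly increasing on $[0,\zeta]$, so $g(\zeta)>g(0)=2aR/b>0$, which is exactly the required conclusion.

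Part (a) I would argue by contradiction: suppose there is $\zeta>x_0$ with $g'(\zeta)=0$ and $g(\zeta)>0$. From \cref{lem:3-1} and $g'''(x_0)=0$, $g'''<0$ on $[0,x_0)$ and $g'''>0$ on $(x_0,\infty)$, so $g''$ attains its global minimum at $x_0$. Evaluating \eqref{eqn:3-4} at $x_0$ gives $g^{(4)}(x_0)=g'(x_0)g''(x_0)$, which is positive by \cref{lem:3-1}; hence $g''(x_0)\neq0$ and $g'(x_0),g''(x_0)$ have the same sign. If $g''(x_0)>0$ then $g''>0$ everywhere, $g'$ is strictly increasing, and $g'$ never vanishes on $(0,\infty)$ — contradiction. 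If $g''(x_0)<0$ then also $g'(x_0)<0$; $g''$ increases back to $0$ at some first $\xi_2>x_0$, $g'$ stays negative on $[x_0,\xi_2]$, and on $(\xi_2,\infty)$ $g'$ is strictly increasing, hence has at most one zero $\zeta$, which must satisfy $\zeta>\xi_2$, $g''(\zeta)>0$ and $g'''(\zeta)>0$. It then remains to fix the sign of $k$: since $g''(0)=A>0>g''(x_0)$ there is $\xi_1\in(0,x_0)$ with $g''(\xi_1)=0$, and \eqref{eqn:3-1} at $\xi_1$ gives $k=g'''(\xi_1)-g'(\xi_1)^2<0$ because $g'''(\xi_1)<0$. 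Finally \eqref{eqn:3-1} at $\zeta$ gives $g(\zeta)g''(\zeta)=k-g'''(\zeta)<0$, and since $g''(\zeta)>0$ this forces $g(\zeta)<0$, contradicting $g(\zeta)>0$.

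The routine gaps left to fill are the global existence of the solution as far as the points $\xi_1,\xi_2,\zeta$ used above — I would work on the maximal interval of existence and note that whenever the relevant later point fails to exist, $g'$ has no zero beyond $x_0$ (resp.\ $\alpha$) and the assertion holds trivially — together with the elementary estimates showing that $g''$ really does cross zero (from $g'''$ being positive and increasing, resp.\ negative and decreasing). The genuinely delicate step is part (a) in the case $g''(x_0)<0$: the implication ``$g^{(4)}(x_0)=g'(x_0)g''(x_0)>0\Rightarrow g'(x_0)<0$'' together with reading off $k<0$ from the earlier zero of $g''$ is exactly what makes the final sign $g(\zeta)<0$ come out, and all of it hinges on \cref{lem:3-1}.
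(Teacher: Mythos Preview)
Your argument is correct, and in part (b) it is even a bit cleaner than the paper's: having shown $g'>0$ on $(0,\zeta)$, you conclude $g(\zeta)>g(0)>0$ by monotonicity, whereas the paper instead evaluates \eqref{eqn:3-4} at $\zeta$ to obtain $h_4(\zeta)=-g(\zeta)h_3(\zeta)$ and reads off $g(\zeta)>0$ from $h_4(\zeta)>0$ and $h_3(\zeta)<0$.

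For part (a) the two arguments diverge at the final step. The paper's route is a one--line evaluation of \eqref{eqn:3-4} at $\zeta$: since $h_1(\zeta)=0$ one gets $h_4(\zeta)=-g(\zeta)h_3(\zeta)$, and \cref{lem:3-1} together with $g(\zeta)>0$ forces $h_3(\zeta)<0$, contradicting $\zeta>x_0$. Your route is to evaluate the undifferentiated equation \eqref{eqn:3-1} twice: first at the earlier zero $\xi_1$ of $g''$ to obtain $k=g'''(\xi_1)-g'(\xi_1)^2<0$, and then at $\zeta$ to obtain $g(\zeta)g''(\zeta)=k-g'''(\zeta)<0$, whence $g(\zeta)<0$. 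Both are sound; the paper's is slightly shorter because it does not need the sign of $k$ or of $g''(\zeta)$, while yours has the minor advantage of staying with the third--order equation. Your case split on the sign of $g''(x_0)$ is handled in the paper by first assuming $\zeta$ exists, producing a zero $\alpha$ of $g''$ via the mean value theorem, and then concluding $g''(x_0)<0$ from the fact that $x_0$ is the global minimum; this is equivalent to your disposal of the case $g''(x_0)>0$.
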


\begin{proof}
(a) By \cref{lem:3-1}, then $h_4(\xi)>0$ for all $\xi\geq0$. Since $h_3(x_0)=0$, then  $h_3(\xi)<0$ for all $\xi\in[0,x_0)$ and $h_3(\xi)>0$ for all $\xi>x_0$, which implies that $h_2(\xi)$ is strictly decreasing on $[0,x_0)$ and strictly increasing on $(x_0,+\infty)$. Hence $x_0$ is the unique global minimum point of $h_2(\xi)$ on $[0,+\infty)$. Since $h_3(x_0)=0$, by equation \eqref{eqn:3-4} and \cref{lem:3-1}, then $h_4(x_0)=h_1(x_0)h_2(x_0)-g(x_0)h_3(x_0)=h_1(x_0)h_2(x_0)>0$. \\	
Let's assume that there is some $\zeta>x_0$ such that $h_1(\zeta)=g'(\zeta)=0$ and $g(\zeta)>0$.  Since $g'(0)=g'(\zeta)=0$, by Lagrange's mean value theorem, then there exists some $\alpha\in(0,\zeta)$ such that $h_2(\alpha)=0$. Since $x_0$ is the unique global minimum point of $h_2(\xi)$ on $[0,+\infty)$ and $h_2(x_0)\neq0$, then $h_2(x_0)<0$. Since $h_4(\xi)>0$ for all $\xi\geq0$ and $h_3(\xi)>0$ for all $\xi>x_0$, then $\displaystyle \lim_{\xi\rightarrow+\infty} h_2(\xi)=+\infty$. Since $h_2(0)=A>0$, $h_2(x_0)<0$, and $h_4(\xi)>0$ for all $\xi\geq0$, then there exists a unique $\beta>x_0$ such that $h_2(\xi)<0$ for all $\alpha<\xi<\beta$, and $h_2(\xi)>0$ for all $0\leq \xi<\alpha$ and all $\xi>\beta$. 
		Since $h_1(x_0)h_2(x_0)>0$ and $h_2(x_0)<0$, then $h_1(x_0)<0$. Since $h_1(\zeta)=0$, by equation \eqref{eqn:3-4}, then $h_4(\zeta)=h_1(\zeta)h_2(\zeta)-g(\zeta)h_3(\zeta)=-g(\zeta)h_3(\zeta)$. Since $h_4(\xi)>0$ for all $\xi>0$ and $g(\zeta)>0$, then $h_3(\zeta)<0$, which implies that $\zeta<x_0$, this leads to a contradiction.

(b) Since $g'''(\xi)<0$ for all $\xi\geq0$ and $h_2(\alpha)=0$ for some $\alpha>0$, then $h_2(\xi)>0$ for all $0\leq \xi<\alpha$ and $h_2(\xi)<0$ for all $\xi>\alpha$, which implies that $h_1(\xi)$ is strictly increasing on $[0,\alpha)$ and strictly decreasing on $(\alpha, +\infty)$. Hence $\alpha$ is the unique global maximum point of $h_1(\xi)$ on $[0,+\infty)$. Since $h_1(0)=0$, then $h_1(\alpha)>0$. Since $h_2(\xi)<0$ for all $\xi>\alpha$ and $h_3(\xi)<0$ for all $\xi\geq0$, then $\displaystyle \lim_{\xi\rightarrow+\infty} h_1(\xi)=-\infty$. Since $h_1(\alpha)>0$ and $h_2(\xi)<0$ for all $\xi>\alpha$, then there exists q unique $\zeta>\alpha$ such that $g'(\zeta)=h_1(\zeta)=0$. Since $h_1(\zeta)=0$, by equation \eqref{eqn:3-4}, then $h_4(\zeta)=h_1(\zeta)h_2(\zeta)-g(\zeta)h_3(\zeta)=-g(\zeta)h_3(\zeta)$. Since $\zeta>\alpha$, $h_3(\xi)<0$ for all $\xi\geq0$ and \cref{lem:3-1}, then $g(\zeta)>0$. Therefore, in this case, there exists a solution of (\ref{ode1}) and (\ref{bc}).  We will designate this solution as type $I$ solution.
\end{proof}

\begin{proposition}\label{prop:3-2}
	Assume that $A<0$ and $B<0$.
	\begin{itemize}
		\item[(a)] Then there exists some $x_0>0$ such that $g'''(x_0)=0$. 
		  
		\item[(b)] Then there is no point $\zeta>0$ such that $g'(\zeta)=0$ and $g(\zeta)>0$.	
	\end{itemize}	
\end{proposition}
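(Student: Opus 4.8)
The plan is to prove (a) and (b) in turn, leaning on \cref{lem:3-1}, which applies here since $g(0)>0$ and $B<0$. From \cref{lem:3-1} we get that $g^{(4)}(\xi)>0$ for all $\xi\geq0$, hence $g'''$ is strictly increasing on $[0,+\infty)$; moreover its proof gives the explicit bound $g^{(4)}(\xi)\geq g^{(4)}(0)\,e^{-\int_0^\xi g(t)\,dt}$ with $g^{(4)}(0)=-g(0)B>0$, which will be the engine of part (a).

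For (a) I would argue by contradiction. If $g'''(\xi)\neq0$ for every $\xi>0$, then since $g'''(0)=B<0$ and $g'''$ is continuous we have $g'''(\xi)<0$ for all $\xi\geq0$. Then $g''$ is strictly decreasing, so $g''(\xi)<g''(0)=A<0$ for $\xi>0$; integrating twice (using $g'(0)=0$) gives $g'(\xi)<A\xi$ and $g(\xi)<g(0)+\frac12A\xi^2$, so $g(\xi)\to-\infty$ and $\int_0^\xi g(t)\,dt\to-\infty$ like $\frac16A\xi^3$. Plugging this into $g^{(4)}(\xi)\geq g^{(4)}(0)e^{-\int_0^\xi g}$ forces $g^{(4)}(\xi)\to+\infty$, whence $g'''(\xi)=B+\int_0^\xi g^{(4)}\to+\infty$, contradicting $g'''<0$. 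So $g'''(x_0)=0$ for some $x_0>0$.

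For (b) I would again argue by contradiction: suppose there is $\zeta>0$ with $g'(\zeta)=0$ and $g(\zeta)>0$. By (a) pick $x_0>0$ with $g'''(x_0)=0$; since $g'''$ is strictly increasing (\cref{lem:3-1}), $g'''<0$ on $[0,x_0)$ and $g'''>0$ on $(x_0,+\infty)$, so $g''$ is strictly decreasing on $[0,x_0]$ and therefore $g''(\xi)\leq A<0$ there. Since $g'(0)=g'(\zeta)=0$, Rolle's theorem yields $\alpha\in(0,\zeta)$ with $g''(\alpha)=0$; as $g''<0$ on $[0,x_0]$ we must have $\alpha>x_0$, hence $\zeta>\alpha>x_0$ and $g'''(\zeta)>0$. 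On the other hand, evaluating \eqref{eqn:3-4} at $\zeta$ and using $g'(\zeta)=0$ gives $g^{(4)}(\zeta)=-g(\zeta)g'''(\zeta)$; since $g^{(4)}(\zeta)>0$ by \cref{lem:3-1} and $g(\zeta)>0$, this forces $g'''(\zeta)<0$, a contradiction. Hence no such $\zeta$ exists.

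The step I expect to be delicate is part (a): one needs the cascade of inequalities actually to push $g$, and hence $\int_0^\xi g$, to $-\infty$, and one needs the solution to be defined far enough in $\xi$ for the growth of $g^{(4)}$ to bite. This requires either invoking global existence of $g$ on $[0,+\infty)$ under these sign conditions, or ruling out finite-time blow-up directly, which is routine given the a priori one-sided bounds $g''<A$, $g'<A\xi$, $g<g(0)+\frac12A\xi^2$. Part (b), by contrast, is a short bookkeeping argument that mirrors \cref{prop:3-1}(a): combine monotonicity of $g'''$ from \cref{lem:3-1}, Rolle's theorem, and the identity \eqref{eqn:3-4} at the critical point.
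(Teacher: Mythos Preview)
Your proof is correct. Part (a) follows the paper's argument essentially verbatim: the same cascade $g''<A$, $g'<A\xi$, $g<g(0)+\tfrac12 A\xi^2$ to force $g\to-\infty$, and then a blow-up of $g^{(4)}$ to contradict $g'''<0$. The only cosmetic difference is that the paper invokes equation~\eqref{eqn:3-5} to get $h_5>0$ eventually (and then $h_3\to+\infty$), whereas you reuse the integrating-factor inequality from the proof of \cref{lem:3-1} to get $g^{(4)}\to+\infty$ directly; these are equivalent.

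Part (b) is where you take a genuinely shorter route. The paper argues constructively: it locates the unique $x_1>x_0$ with $g''(x_1)=0$, then the unique $x_2>x_1$ with $g'(x_2)=0$, and finally uses \eqref{eqn:3-4} at $x_2$ to show $g(x_2)<0$. You instead assume a hypothetical $\zeta$ with $g'(\zeta)=0$ and $g(\zeta)>0$, use Rolle to produce a zero of $g''$ which must lie beyond $x_0$, infer $\zeta>x_0$ and hence $g'''(\zeta)>0$, and then read off a sign contradiction from \eqref{eqn:3-4}. Your argument is cleaner for the statement as written; the paper's longer argument actually proves more (existence and uniqueness of a zero of $g'$ on $(0,\infty)$, with $g<0$ there), information that is not needed here but is consistent with the paper's broader classification program.
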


\begin{proof}
(a) If the statement is not right, since $B=g'''(0)<0$, then $h_3(\xi)<0$ for all $\xi\geq0$. Since $A=g''(0)<0$, then $h_2(\xi)\leq A<0$ for all $\xi\geq0$, which implies that $h_1(\xi)-h_1(0)\leq A \xi$ for  all $\xi\geq0$. Since $h_1(0)=0$, then $h_1(\xi)<0$ and $h_1(\xi)\leq A \xi$ for all $\xi\geq0$, which implies that $g(\xi)-g(0)\leq \frac{A}{2}\xi^2$ for all $\xi\geq0$. Hence $\displaystyle \lim_{\xi\rightarrow+\infty} g(\xi)=-\infty$. By equation \eqref{eqn:3-5}, then $h_5=(h_2)^2-gh_4$ for all $\xi\geq 0$. By \cref{lem:3-1}, then $h_4(\xi)>0$ for all $\xi\geq0$. Since $\displaystyle \lim_{\xi\rightarrow+\infty} g(\xi)=-\infty$, then there exists some $R>0$ such that $h_5(\xi)>0$ for all $\xi\geq R$. Since $h_4(\xi)>0$ and $h_5(\xi)>0$ for all $\xi\geq R$, then $\displaystyle \lim_{\xi\rightarrow+\infty} h_3(\xi)=+\infty$, this leads to a contradiction.
		
               (b) By the result of part (a) then there exists some $x_0>0$ such that $g'''(x_0)=0$. Since $h_4(\xi)\geq0$ for all $\xi\geq0$, then $h_3(\xi)<0$ for all $\xi\in[0,x_0)$ and $h_3(\xi)>0$ for all $\xi>x_0$, which implies that $h_2(\xi)$ is strictly decreasing on $[0,x_0)$ and strictly increasing on $(x_0,+\infty)$. Hence $x_0$ is the unique global minimum point of $h_2(\xi)$ on $[0,+\infty)$. Since $g''(0)=A<0$ and $h_3(\xi)<0$ for all $\xi\in[0,x_0)$, then $h_2(\xi)<A<0$ for all $\xi\in(0,x_0]$. Since $h_1(0)=0$, then $h_1(\xi)\leq A \xi<0$ for all $\xi\in(0,x_0]$. Since $h_3(\xi)>0$ for all $\xi>x_0$ and $h_4(\xi)>0$ for all $\xi\geq0$, then $\displaystyle \lim_{\xi\rightarrow+\infty} h_2(\xi)=+\infty$. Since $h_2(x_0)<0$ and $h_3(\xi)>0$ for all $\xi>x_0$, then there exists a unique $x_1>x_0$ such that $h_2(\xi)<0$ for all $\xi\in[x_0,x_1)$, $h_2(x_1)=0$, and $h_2(\xi)>0$ for all $\xi>x_1$. Since $h_2(\xi)<A<0$ for all $\xi\in(0,x_0]$, then $h_2(\xi)<0$ for all $0\leq \xi<x_1$ and $h_2(\xi)>0$ for all $\xi>x_1$, which implies that $h_1(\xi)$ is strictly decreasing on $[0,x_1]$ and strictly increasing on $[x_1,+\infty)$. Hence $x_1$ is the unique global minimum point of $h_1(\xi)$ on $[0,+\infty)$. Since $h_1(\xi)\leq A\xi<0$ for all $\xi\in(0,x_0]$, then $h_1(x_1)<0$. Since $h_2(\xi)>0$ for all $\xi>x_1$ and $h_3(\xi)>0$ for all $\xi>x_0$, then $\displaystyle \lim_{\xi\rightarrow+\infty} h_1(\xi)=+\infty$. Since $h_1(\xi)<0$ for all $0<\xi\leq x_1$ and $h_2(\xi)>0$ for all $\xi>x_1$, then there exists a unique $x_2>x_1$ such that $h_1(x_2)=0$. So we know that $x_2$ is the unique solution of $h_1(\xi)=0$ for all $\xi>0$. On the other hand, since $x_2>x_1>x_0$, then $h_4(x_2)>0$ and $h_3(x_2)>0$. Since $h_1(x_2)=0$, by equation \eqref{eqn:3-4}, then $h_4(x_2)=h_1(x_2)h_2(x_2)-g(x_2)h_3(x_2)=-g(x_2)h_3(x_2)>0$. Since $h_3(x_2)>0$, then $g(x_2)<0$. Since $x_2$ is the unique solution of $h_1(\xi)=0$ for all $\xi>0$, and  $g(x_2)<0$, then there is no point $\zeta>0$ such that $g'(\zeta)=0$ and $g(\zeta)>0$.			
\end{proof}

\begin{proposition}\label{prop:3-3}
	Assume that $B>0$.
	\begin{itemize}
		\item[(a)] If $h_4(\xi)\neq0$ for all $\xi>0$, then $h_4(\xi)<0$ for all $\xi\leq0$.

		\item[(b)] If $h_4(\xi_{0})=0$ for some $\xi_{0}>0$, then $h_4(\xi)<0$ for all $0\leq \xi<\xi_{0}$ and $h_4(\xi)>0$ for all $\xi>\xi_{0}$.	
	\end{itemize}	
\end{proposition}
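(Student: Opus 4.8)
The plan is to run the integrating-factor argument of \cref{lem:3-1} with the sign of $B$ reversed. Put $\Phi(\xi)=\exp\!\left(\int_0^\xi g(t)\,dt\right)>0$ and $M(\xi)=\Phi(\xi)h_4(\xi)$. Differentiating and using \eqref{eqn:3-5} in the form $h_4'+gh_4=h_5+gh_4=(h_2)^2$ gives $M'(\xi)=\Phi(\xi)(h_2(\xi))^2\ge 0$, so $M$ is non-decreasing on $[0,+\infty)$. Its initial value is pinned down by \eqref{eqn:3-4} together with $g'(0)=0$: $M(0)=h_4(0)=g'(0)g''(0)-g(0)g'''(0)=-g(0)B$, which is strictly negative because $g(0)=2aR/b>0$ and $B>0$.

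The next step is to upgrade $M$ to being \emph{strictly} increasing. Since $M'=\Phi\,h_2^2$ and $\Phi>0$, $M$ can fail to be strictly increasing only if $h_2=g''$ vanishes identically on some nondegenerate subinterval $[c,d]$. In that case $g$ coincides on $[c,d]$ with the affine map $\ell(\xi)=g(c)+g'(c)(\xi-c)$, which is itself a global solution of \eqref{eqn:3-1} since $\ell'''+\ell\ell''-\ell'^2=-g'(c)^2$ is constant; by uniqueness for the initial value problem attached to the smooth third-order equation \eqref{eqn:3-1}, $g\equiv\ell$, so $g'(0)=0$ forces $g'(c)=0$, hence $\ell$ and then $g$ are constant, hence $A=g''(0)=0$, contradicting $A\ne 0$ in \eqref{eqn:3-3}. (Alternatively, \eqref{eqn:3-1} has polynomial right-hand side, so $h_2$ is real-analytic; since $h_2(0)=A\ne 0$ it is not identically zero, and its zeros are isolated.) Hence $M$ is strictly increasing on $[0,+\infty)$.

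With $M$ strictly increasing and $M(0)<0$, both assertions follow. For (a): if $h_4$ never vanishes on $(0,+\infty)$, then neither does $M$, so, $M$ being continuous with $M(0)<0$, the intermediate value theorem prevents $M$ from ever becoming positive; thus $M<0$ on $[0,+\infty)$ and $h_4=M/\Phi<0$ there (the natural conclusion being for all $\xi\ge 0$). For (b): if $h_4(\xi_0)=0$ for some $\xi_0>0$, then $M(\xi_0)=0$, and strict monotonicity makes $\xi_0$ the unique zero of $M$, with $M<0$ on $[0,\xi_0)$ and $M>0$ on $(\xi_0,+\infty)$; dividing by $\Phi>0$ gives $h_4<0$ on $[0,\xi_0)$ and $h_4>0$ on $(\xi_0,+\infty)$.

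I expect the only genuinely delicate point to be the strict-monotonicity step, that is, excluding $g''\equiv 0$ on a subinterval; the rest is a short sign analysis of $M$ strictly parallel to the proof of \cref{lem:3-1}.
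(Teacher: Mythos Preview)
Your proof is correct and follows essentially the same integrating-factor idea as the paper. The only organizational difference is that the paper, in part (b), first isolates the minimal zero $\eta$ of $h_4$, applies the integrating factor from $\eta$ to get $h_4\ge 0$ on $[\eta,\infty)$, and then upgrades to strict positivity via a separate claim (using the second-order ODE $h_2''+gh_2'-h_1h_2=0$ and uniqueness to rule out $h_2\equiv 0$); you instead establish strict monotonicity of $M=\Phi h_4$ globally upfront (via uniqueness for the third-order equation or analyticity), which streamlines the argument into a single step.
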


\begin{proof}
		(a) By equation \eqref{eqn:3-4}, since $g'(0)=0$, then $h_4(0)= h_1(0)h_2(0)-g(0)h_3(0)$ $=-g(0)B$. Since $g(0)>0$ and $B>0$, then $h_4(0)<0$. By the assumption that $h_4(\xi)\neq0$ for all $\xi>0$, then $h_4(\xi)<0$ for all $\xi\leq0$.		
		(b) By the proof of part (a), we know that $h_4(0)<0$. Let $\Sigma=\{\xi>0: h_4(\xi)=0 \}$, by the assumption, then $\xi_0\in \Sigma$. Since $h_4(0)<0$, by the continuity, we know that $\displaystyle \eta:=\inf_{\xi\in\Sigma}\ \xi\in (0,\xi_0]$, which implies that $\xi_0\geq\eta$, $h_4(\eta)=0$ and $h_4(\xi)<0$ for all $0\leq \xi<\eta$. By equation \eqref{eqn:3-5}, then $h_4'+gh_4\geq0$ for all $\xi\geq0$, that is, $\displaystyle \left(e^{\int_\eta^\xi g(t)dt}h_4(\xi)\right)'\geq0$ for all $\xi\geq0$, which implies that $e^{\int_\eta^\xi g(t)dt}h_4(\xi)\geq h_4(\eta)=0$ for all $\xi\geq\eta$. Hence $h_4(\xi)=g^{(4)}(\xi)\geq0$ for all $\xi\geq \eta$. 

\begin{claim}\label{clm:1}		
 $h_4(\xi)>0$ for all $\xi>\eta$.
 \end{claim}
	\begin{proof}	
	If not, since $h_4(\xi)\geq0$ for all $\xi\geq \eta$, then there exists some $\alpha>\eta$ such that $h_4(\alpha)=0$. If $h_4(\xi)\equiv 0$ for all $\eta\leq \xi\leq \alpha$, by equation \eqref{eqn:3-5}, then $h_2(\xi)\equiv 0$ for all $\eta\leq \xi\leq\alpha$. Rewrite equation \eqref{eqn:3-4} as $h_2''+gh_2'-h_1h_2=0$ for all $\xi\geq0$, since $h_2(\eta)=h_2'(\eta)=0$, by the uniqueness theorem of solutions to the ODE, then $h_2(\xi)=0$ for all $\xi\geq0$, which implies that $B=h_3(0)=h_2'(0)=0$, this leads to a contradiction. Since $h_4(\xi)\geq0$ for all $\xi\geq \eta$, then there exists some $\beta\in(\eta,\alpha)$ such that $h_4(\beta)>0$. Since  $\displaystyle \left(e^{\int_\eta^\xi g(t)dt}h_4(\xi)\right)'\geq0$ for all $\xi\geq0$ and $h_4(\beta)>0$, then $h_4(\xi)>0$ for all $\xi>\beta$. Since $\alpha>\beta$, then $h_4(\beta)<h_4(\alpha)=0$, this leads to a contradiction. Therefore, we know that $h_4(\xi)>0$ for all $\xi>\eta$.
       \end{proof}		
		
		 Since $h_4(\xi_0)=h_4(\eta)=0$, $\xi_0\geq\eta$, by \cref{clm:1}, then $\xi_0=\eta$. By the definition of $\eta$ and \cref{clm:1}, then $h_4(\xi)<0$ for all $0\leq \xi<\xi_0$ and $h_4(\xi)>0$ for all $\xi>\xi_0$.
\end{proof}

\begin{proposition}\label{prop:3-4}
	Assume that $A>0$ and $B>0$, if $g^{(4)}(\xi)<0$ for all $\xi\geq0$, then $g'''(\xi)>0$ for all $\xi\geq0$. In particular, since $g''(0)=A>0$, then $g''(\xi)>A>0$ for all $\xi>0$. Moreover, since $g'(0)=0$, then $g'(\xi)>0$ for all $\xi>0$.
\end{proposition}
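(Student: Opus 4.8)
The plan is a proof by contradiction built on propagating sign information down through the derivatives. Note first that the hypothesis supplies $g^{(4)}(\xi)=h_4(\xi)<0$ on all of $[0,\infty)$ outright, so there is no need to invoke \cref{lem:3-1} (which would in any case require $B<0$); in particular $g'''=h_3$ is automatically strictly decreasing on $[0,\infty)$. Starting from $g'''(0)=B>0$ there are only two possibilities: either $g'''(\xi)>0$ for every $\xi\geq0$, in which case the first assertion is already proved, or $g'''$ has a first zero $x_0>0$, meaning $g'''(x_0)=0$ while $g'''(\xi)>0$ on $[0,x_0)$. The whole task is to exclude the second possibility.

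Assume such an $x_0$ exists. First I would cascade the sign of $g'''$ onto the lower-order derivatives on $[0,x_0]$. Since $h_3\geq0$ on $[0,x_0]$ (strictly positive on the interior), $g''=h_2$ is strictly increasing there, so $g''(0)=A>0$ yields $g''(\xi)>A>0$ for $\xi\in(0,x_0]$; in particular $g''(x_0)>0$. Since $g''>0$ on $(0,x_0]$, $g'=h_1$ is strictly increasing on $[0,x_0]$, so $g'(0)=0$ yields $g'(\xi)>0$ for $\xi\in(0,x_0]$; in particular $g'(x_0)>0$.

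The contradiction then drops out of equation \eqref{eqn:3-4}. Evaluating it at $x_0$ and using $h_3(x_0)=g'''(x_0)=0$, the term $g(x_0)h_3(x_0)$ vanishes (so the sign of $g(x_0)$ plays no role here), leaving $g^{(4)}(x_0)=h_1(x_0)h_2(x_0)=g'(x_0)g''(x_0)>0$. This contradicts the standing hypothesis $g^{(4)}<0$ on $[0,\infty)$. Hence no first zero $x_0$ can exist, and therefore $g'''(\xi)>0$ for all $\xi\geq0$.

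The two remaining (``in particular'') claims then follow by the same monotonicity reasoning, now carried out on the whole half-line: $g'''>0$ on $[0,\infty)$ forces $g''$ to be strictly increasing, hence $g''(\xi)>g''(0)=A>0$ for all $\xi>0$; and then $g''>0$ forces $g'$ to be strictly increasing, hence $g'(\xi)>g'(0)=0$ for all $\xi>0$. I do not expect a genuine obstacle in this proposition — the only point needing a little care is the clean extraction of the ``first zero'' $x_0$ from continuity of $g'''$ together with $g'''(0)=B>0$; everything after that is routine bookkeeping feeding into the single algebraic identity \eqref{eqn:3-4} evaluated at $x_0$.
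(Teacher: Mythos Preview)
Your proposal is correct and follows essentially the same approach as the paper: both argue by contradiction, take the first zero $x_0$ of $g'''$, cascade positivity from $h_3>0$ on $[0,x_0)$ down to $h_2(x_0)>0$ and $h_1(x_0)>0$, and then evaluate identity \eqref{eqn:3-4} at $x_0$ to obtain $h_4(x_0)=h_1(x_0)h_2(x_0)>0$, contradicting the hypothesis $g^{(4)}<0$. Your write-up is slightly more explicit about the final monotonicity claims and about why \cref{lem:3-1} is not needed here, but the argument is the same.
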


\begin{proof}
		If not, since $h_3(0)=B>0$ and $h_4(\xi)<0$ for all $\xi\geq0$, then there exists a unique $x_0 >0$ such that $h_3(\xi)>0$ for all $0\leq \xi<x_0$, $h_3(x_0)=0$, and $h_3(\xi)<0$ for all $\xi>x_0$. Since $h_2(0)=A>0$ and $h_3(\xi)>0$ for all $0\leq \xi< x_0$ , then $h_2(\xi)>0$ for all $0\leq \xi\leq x_0$. Since $h_1(0)=0$, then $h_1(\xi)>0$ for all $0\leq \xi\leq x_0$, which implies that $h_1(x_0)h_2(x_0)>0$. Since $h_3(x_0)=0$ and $h_4(\xi)<0$ for all $\xi\geq0$, by equation \eqref{eqn:3-4}, we know that $h_1(x_0)h_2(x_0)=h_4(x_0)-g(x_0)h_3(x_0)=h_4(x_0)<0$, this leads to a contradiction.
\end{proof}

\begin{proposition}\label{prop:3-5}
	Assume that $A>0$ and $B>0$, and there exists some $\xi_{0}>0$ such that $g^{(4)}(\xi)<0$ for all $\xi\in(0,\xi_{0})$ and $g^{(4)}(\xi)>0$ for all $\xi>\xi_{0}$. Then $\xi_{0}$ is the unique global minimum point of $g'''(\xi)$ on $[0,\infty)$ and $g'''(\xi_{0})>0$.  In particular, since $g''(0)=A>0$, then $g''(\xi)>A>0$ for all $\xi>0$. Moreover, since $g'(0)=0$, then $g'(\xi)>0$ for all $\xi>0$.	
\end{proposition}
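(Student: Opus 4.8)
The plan is to follow the scheme of \cref{prop:3-4}, but localized around the single sign-change point $\xi_0$ of $g^{(4)}$ rather than on all of $[0,\infty)$. First I would record the immediate monotonicity consequences of the hypothesis: since $h_4=g^{(4)}<0$ on $(0,\xi_0)$ and $h_4>0$ on $(\xi_0,\infty)$, the function $h_3=g'''$ is strictly decreasing on $[0,\xi_0]$ and strictly increasing on $[\xi_0,\infty)$, so $\xi_0$ is the unique global minimum point of $g'''$ on $[0,\infty)$. Also, since $g$ is smooth, $g^{(4)}$ is continuous and changes sign at $\xi_0$, hence $h_4(\xi_0)=0$; I will need this fact in the boundary case below.

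The heart of the matter is to prove $g'''(\xi_0)>0$, which I would do by contradiction. Suppose $h_3(\xi_0)\le0$. Since $h_3(0)=B>0$ and $h_3$ is continuous and strictly decreasing on $[0,\xi_0]$, there is a unique $x_0\in(0,\xi_0]$ with $h_3(x_0)=0$ and $h_3>0$ on $[0,x_0)$. On $[0,x_0)$ the positivity of $h_3$ makes $h_2=g''$ strictly increasing, and $h_2(0)=A>0$, so $h_2>0$ on $[0,x_0]$; in turn $h_1=g'$ is strictly increasing on $[0,x_0]$ with $h_1(0)=0$, so $h_1(x_0)>0$. Evaluating \eqref{eqn:3-4} at $x_0$ and using $h_3(x_0)=0$ gives $h_4(x_0)=h_1(x_0)h_2(x_0)>0$. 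But the hypothesis forces $h_4(x_0)\le0$: if $x_0<\xi_0$ then $h_4(x_0)<0$ outright, while if $x_0=\xi_0$ then $h_4(x_0)=h_4(\xi_0)=0$. Either way this contradicts $h_4(x_0)>0$, so $g'''(\xi_0)>0$.

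Once $g'''(\xi_0)>0$ is in hand, the remaining assertions follow at once: since $\xi_0$ is the global minimum of $g'''$ on $[0,\infty)$, we get $g'''(\xi)\ge g'''(\xi_0)>0$ for all $\xi\ge0$; hence $g''$ is strictly increasing with $g''(0)=A$, so $g''(\xi)>A>0$ for all $\xi>0$; and then $g'$ is strictly increasing with $g'(0)=0$, so $g'(\xi)>0$ for all $\xi>0$.

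I expect the only genuinely delicate point to be the boundary case $x_0=\xi_0$ in the contradiction step, where one cannot invoke a strict sign of $h_4$ at $\xi_0$ and must instead use the continuity of $g^{(4)}$ together with its sign change to conclude $h_4(\xi_0)=0$; everything else is routine monotonicity bookkeeping with \eqref{eqn:3-4}, identical in spirit to the proof of \cref{prop:3-4}.
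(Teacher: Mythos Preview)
Your proposal is correct and follows essentially the same approach as the paper: both argue that $\xi_0$ is the unique global minimum of $g'''$ from the sign hypothesis on $g^{(4)}$, then prove $g'''(\xi_0)>0$ by contradiction via the first zero of $h_3$ on $(0,\xi_0]$ and the identity \eqref{eqn:3-4}. Your explicit handling of the boundary case $x_0=\xi_0$ (using continuity of $g^{(4)}$ to get $h_4(\xi_0)=0$) is slightly more careful than the paper, which simply writes $h_4(\eta)\le 0$ for $\eta\le\xi_0$ without further comment.
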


\begin{proof}
	Since  $g^{(4)}(\xi)<0$ for all $\xi\in(0,\xi_0)$ and $g^{(4)}(\xi)>0$ for all $\xi>\xi_0$, then $g'''(\xi)$ is strictly decreasing on $[0,\xi_0]$ and strictly increasing on $[\xi_0,+\infty)$, which implies that $\xi_0$ is the unique global minimum point of $g'''(\xi)$ on $[0,\infty)$. Now let's decide the sign of $h_3(\xi_0)=g'''(\xi_0)$. If $h_3(\xi_0)\leq0$, since $h_3(0)=B>0$ and $g^{(4)}(\xi)<0$ for all $\xi\in[0,\xi_0)$, then there exists a unique $\eta\in(0,\xi_0]$ such that $h_3(\xi)>0$ for all $0\leq \xi<\eta$, $h_3(\eta)=0$, and $h_3(\xi)<0$ for all $\xi_0>\xi>\eta$. Since $h_2(0)=A>0$, then $h_2(\xi)>A>0$ for all $0\leq \xi\leq \eta$. Since $h_1(0)=0$, then $h_1(\xi)>0$ for all $0\leq \xi\leq \eta$, which implies that $h_1(\eta)h_2(\eta)>0$. Since $h_3(\eta)=0$, $h_4(\xi)<0$ for all $0\leq \xi<\xi_0$ and $0\leq\eta\leq\xi_0$, by equation \eqref{eqn:3-4}, we know that $h_1(\eta)h_2(\eta)=h_4(\eta)-g(\eta)h_3(\eta)=h_4(\eta)\leq 0$, this leads to a contradiction.
\end{proof}

\begin{proposition}\label{prop:3-6}
	Assume that $A<0$ and $B>0$, and $g^{(4)}(\xi)<0$ for all  $\xi\geq0$.
	\begin{itemize}
		\item[(a)] If $g'''(x_0)=0$ for some $x_0>0$, then there is no point $\zeta>0$ such that $g'(\zeta)=0$ and $g(\zeta)>0$. In particular, there is no point $\zeta>x_0$ such that $g'(\zeta)=0$ and $g(\zeta)>0$.
		
		\item[(b)] If $g'''(\xi)>0$ for all $\xi\geq0$, then there is no point $\zeta>0$ such that $g'(\zeta)=0$ and $g(\zeta)>0$.	
	\end{itemize}	
\end{proposition}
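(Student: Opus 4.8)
The plan is to read off the sign of $g$ at each critical point of $g$ from identity \eqref{eqn:3-4}, and to control the shapes of $g'$, $g''$, $g'''$ using $g^{(4)}<0$. Since $g'''(0)=B>0$ and $g^{(4)}<0$ on $[0,\infty)$, the function $h_3=g'''$ is strictly decreasing: in case~(a) it has a unique zero $x_0$, so $h_3>0$ on $[0,x_0)$ and $h_3<0$ on $(x_0,\infty)$, while in case~(b) $h_3>0$ throughout. The one observation that drives everything is that if $\zeta>0$ and $g'(\zeta)=h_1(\zeta)=0$, then \eqref{eqn:3-4} collapses to $g^{(4)}(\zeta)=h_1(\zeta)h_2(\zeta)-g(\zeta)h_3(\zeta)=-g(\zeta)h_3(\zeta)$; since $g^{(4)}(\zeta)<0$, this forces $h_3(\zeta)\neq0$ and makes the sign of $g(\zeta)$ equal to the sign of $h_3(\zeta)$. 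In case~(a) this already disposes of the ``in particular'' clause, since any zero $\zeta>x_0$ of $g'$ has $h_3(\zeta)<0$ and hence $g(\zeta)<0$. Consequently, in both cases the only way a zero $\zeta>0$ of $g'$ could satisfy $g(\zeta)>0$ is if $h_3(\zeta)>0$, i.e. if $\zeta\in(0,x_0)$ in case~(a) or $\zeta$ arbitrary in case~(b).

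The next step is to pin down the qualitative shape of $h_2=g''$ and $h_1=g'$. On $\{h_3>0\}$ the function $h_2$ is strictly increasing, starting from $h_2(0)=A<0$; in case~(a) it is strictly decreasing on $(x_0,\infty)$ and, since there $h_3$ stays bounded away from $0$ (being decreasing and negative), $h_2\to-\infty$, hence eventually $h_1$ is strictly decreasing with $h_1\to-\infty$. Using $h_1(0)=0$ and this monotonicity, the zero set of $g'$ on $(0,\infty)$ is severely constrained: if $h_2\le0$ throughout then $h_1<0$ on $(0,\infty)$ and there is no zero; otherwise $h_2>0$ on a single interval (in case~(a) a bounded one, $(x_1,x_1')$ with $x_1<x_0<x_1'$), which forces $h_1$ to decrease, then increase, then --- in case~(a) --- decrease again, so $g'$ has at most two zeros on $(0,\infty)$, and only the smallest one, say $\zeta$, can possibly satisfy $g(\zeta)>0$.

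It then remains to contradict the existence of such a $\zeta$. In case~(a), if $g(\zeta)>0$ then $\zeta\in(x_1,x_0)$, so $g''(\zeta)=h_2(\zeta)>0$ and $h_1>0$ just beyond $\zeta$; since $h_1\to-\infty$ there is a further zero $\zeta'>x_1'>x_0$, and $h_1>0$ on $(\zeta,\zeta')$, so $g$ is increasing there and $g(\zeta')>g(\zeta)>0$, whereas \eqref{eqn:3-4} at $\zeta'$ (where $h_3<0$) forces $g(\zeta')<0$ --- a contradiction. In case~(b) there is no second zero to exploit, so instead note that $\zeta>x_1$, whence $h_1>0$ on $(\zeta,\infty)$ and $g''(\zeta)=h_2(\zeta)>0$; thus $g$ is increasing and $g\ge g(\zeta)>0$ on $[\zeta,\infty)$. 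Integrating \eqref{eqn:3-5} over $[\zeta,\xi]$ and using $g>0$, $g^{(4)}<0$ there gives $\int_\zeta^\xi (g'')^2\,dt\le -g^{(4)}(\zeta)$ for every $\xi$, so $g''\in L^2([\zeta,\infty))$; but $g''=h_2$ is increasing with $g''(\zeta)>0$, so $(g'')^2$ is bounded below by a positive constant on $[\zeta,\infty)$, which is incompatible with integrability. (One could integrate from $0$ instead, using $-g^{(4)}(0)=g(0)B$ from \cref{prop:3-3}(a).)

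The analytic content is light --- the one-line consequence of \eqref{eqn:3-4} at a critical point, the monotonicity forced by $g^{(4)}<0$, and a one-line $L^2$ estimate from \eqref{eqn:3-5}. The main obstacle, and what must be written out carefully, is the bookkeeping in the middle step: showing that the zero set of $g'$ on $(0,\infty)$ is as small as claimed, and checking every possible position of these zeros relative to $x_0$, so that the single surviving configuration is exactly the one killed either by comparing $g(\zeta')$ with $g(\zeta)$ (case~(a)) or by the integrability obstruction (case~(b)).
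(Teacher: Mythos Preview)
Your proof is correct. Part~(a) follows essentially the same line as the paper: both use identity \eqref{eqn:3-4} at critical points and the unimodal shape of $g''$ forced by $g^{(4)}<0$. The only difference is in closing the contradiction: the paper evaluates \eqref{eqn:3-4} at $x_0$ to get $h_1(x_0)h_2(x_0)=h_4(x_0)<0$, hence $h_1(x_0)<0$, and then the monotonicity of $h_1$ on $(0,\eta)$ and $(\eta,x_0)$ gives $h_1<0$ throughout $(0,x_0)$, contradicting $h_1(\zeta)=0$; you instead push forward to a second zero $\zeta'>x_1'>x_0$, obtain $g(\zeta')>g(\zeta)>0$ from $h_1>0$ on $(\zeta,\zeta')$, and then $g(\zeta')<0$ from \eqref{eqn:3-4} at $\zeta'$. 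These are minor variants of the same idea. Part~(b), however, is genuinely different. The paper argues by limits: after establishing $g\ge g(\zeta)>0$ globally, it shows $h_3\to L\ge 0$, then $h_4\to 0$ (being increasing and negative with $h_3$ convergent), then extracts a sequence $x_n\to\infty$ with $h_5(x_n)\to 0$, and contradicts \eqref{eqn:3-5} since $(h_2(x_n))^2$ stays bounded below by a positive constant while $h_5(x_n)+g(x_n)h_4(x_n)\le h_5(x_n)\to 0$. Your single integration of \eqref{eqn:3-5} over $[\zeta,\xi]$, giving $\int_\zeta^\xi (g'')^2\,dt \le -g^{(4)}(\zeta)$ for all $\xi$, is more direct and avoids the sequence bookkeeping entirely; what it buys is that the contradiction with $g''\ge g''(\zeta)>0$ on $[\zeta,\infty)$ is immediate.
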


\begin{proof} 
    	     (a) If there exists some $\zeta>0$ such that $g'(\zeta)=0$ and $g(\zeta)>0$, since $h_4(\xi)<0$ for all $\xi\geq0$, by equation \eqref{eqn:3-4}, then $h_4(\zeta)=h_1(\zeta)h_2(\zeta)-g(\zeta)h_3(\zeta)=-g(\zeta)h_3(\zeta)<0$. Since $g(\zeta)>0$, then $h_3(\zeta)>0$. Since $h_4(\xi)<0$ for all $\xi\geq0$ and $h_3(x_0)=0$, then $0<\zeta<x_0$.
    	Since $g'(0)=g'(\zeta)=0$, by Lagrange's mean value theorem, then $h_2(\eta)=0$ for some $\eta\in(0,\zeta)$. Since $h_4(\xi)<0$ for all $\xi\geq0$ and $h_3(x_0)=0$, then $h_3(\xi)>0$ for all $\xi\in[0, x_0)$ and $h_3(\xi)<0$ for all $\xi>x_0$, which implies that $h_2(\xi)$ is strictly increasing on $[0,x_0]$ and strictly decreasing on $[x_0,+\infty)$. Hence $x_0$ is the unique global maximum point of $h_2(\xi)$ on $[0,+\infty)$. Since $h_2(\eta)=0$, then $h_2(x_0)>0$. Since $h_4(\xi)<0$ for all $\xi\geq0$ and $h_3(x_0)=0$, by equation \eqref{eqn:3-4}, then $h_4(x_0)=h_1(x_0)h_2(x_0)-g(x_0)h_3(x_0)=h_1(x_0)h_2(x_0)<0$. Since $h_2(x_0)>0$, then $h_1(x_0)<0$. Since $h_3(\xi)>0$ for all $\xi\in[0,x_0)$, then $h_2(\xi)<0$ for all $\xi\in[0,\eta)$ and $h_2(\xi)>0$ for all $\xi\in(\eta,x_0)$, which implies that $h_1(\xi)$ is strictly decreasing on $[0,\eta)$ and strictly increasing on $(\eta,x_0)$. Since $h_1(0)=0$, then $h_1(\xi)<0$ for all $\xi\in(0,x_0)$. Since $\zeta\in(0,x_0)$, then $g'(\zeta)=h_1(\zeta)<0$, this leads to a contradiction.
    	
    	   (b) If there exists some $\zeta>0$ such that $g'(\zeta)=0$ and $g(\zeta)>0$, since $g'(0)=0$, by Lagrange's mean value theorem, then $h_2(\eta)=0$  for  some  $\eta\in(0,\zeta)$. Since $h_3(\xi)>0$ for all $\xi\geq0$, then $h_2(\xi)<0$ for all $\xi\in[0,\eta)$ and $h_2(\xi)>0$ for all $\xi>\eta$, which implies that $h_1(\xi)$ is strictly decreasing on $[0,\eta]$ and strictly increasing on $[\eta,+\infty)$. Since $\eta<\zeta$ and $h_1(0)=h_1(\zeta)=0$, then $h_1(\xi)<0$ for all $0<\xi<\zeta$ and $h_1(\xi)>0$ for all $\xi>\zeta$, which implies that $\zeta$ is the unique global minimum point of $g(\xi)$ on $[0,+\infty)$. Since $\Delta:=g(\zeta)>0$, then $g(\xi)\geq\Delta>0$ for all $\xi\geq0$. Since $h_4(\xi)<0$ for all $\xi\geq0$, by equation  \eqref{eqn:3-5}, then $h_5(\xi)=(h_2(\xi))^2-g(\xi)h_4(\xi)>0$ for all $\xi\geq0$.  Since $h_3(\xi)>0$ and $h_4(\xi)<0$ for all $\xi\geq0$, then there exists some $L\geq0$ such that $\displaystyle \lim_{\xi\rightarrow+\infty} h_3(\xi)=L$ and $h_3(\xi)>L\geq0$ for all $\xi\geq0$. By Lagrange's mean value theorem, there exists some sequence $\{y_n\}_{n=1}^\infty$ such that $\displaystyle \lim_{n\rightarrow\infty} y_n=+\infty$ and $\displaystyle \lim_{n\rightarrow\infty} h_4(y_n)=0$. Since $h_5(\xi)>0$ for all $\xi\geq0$, then $h_4(\xi)$ is strictly increasing on $[0,+\infty)$, which implies that $\displaystyle \lim_{\xi\rightarrow+\infty} h_4(\xi)=0$. By Lagrange's mean value theorem, there exists some sequence $\{x_n\}_{n=1}^\infty$ such that $\displaystyle \lim_{n\rightarrow\infty} x_n=+\infty$ and $\displaystyle \lim_{n\rightarrow\infty} h_5(x_n)=0$. Since $g(\xi)>0$ and $h_4(\xi)<0$ for all $\xi\geq0$, then $\displaystyle \limsup_{n\rightarrow\infty}[h_5(x_n)+g(x_n)h_4(x_n)]\leq0$. On the other hand, since $h_2(\xi)>0$ for all $\xi>\eta$ and $h_3(\xi)>0$ for all $\xi\geq0$, then $\displaystyle \limsup_{n\rightarrow\infty}\ (h_2(x_n))^2>0$, which contradicts with equation \eqref{eqn:3-5}.  
 \end{proof}

\begin{proposition}\label{prop:3-7}
	Assume that $A<0$ and $B>0$, and there exists some $\xi_{0}>0$ such that $g^{(4)}(\xi)<0$ for all $\xi\in(0,\xi_{0})$ and $g^{(4)}(\xi)>0$ for all $\xi>\xi_{0}$.
	\begin{itemize}
		\item[(a)] If $g'''(x_0)\geq0$ for some $x_0\geq \xi_0$, then there is no point  {$\zeta>x_0$} such that $g'(\zeta)=0$ and $g(\zeta)>0$. In particular, if $g'''(\xi_0)\geq 0$, then there is no point  {$\zeta>\xi_0$} such that $g'(\zeta)=0$ and $g(\zeta)>0$.
		
	      \item[(b)]{ If $g'''(\xi)<0$ for all $\xi\in[\xi_0, \infty)$  and $g''(\xi)$ has only one zero on $[0, \infty)$, then there is  no point $\zeta>\xi_0$ such that $g'(\zeta)=0$ and $g(\zeta)>0$.  }
	    
	    \item[(c)]  If $g'''(\xi)<0$ for all $\xi\in[\xi_0, \infty)$ and $g''(\alpha)=0$ and $g'(\alpha)>0$ for some point $\alpha>\xi_0$, then there exists a unique $\zeta>\alpha$ such that $g'(\zeta)=0$ and $g(\zeta)>0$.  In particular, there exists  a unique $\gamma\in(0,\zeta)$ such that $g'(\gamma)=0$. If $\gamma<\xi_0$, then  $g(\gamma)<0$. If $\gamma>\xi_0$, then $g(\gamma)>0$.	    
\end{itemize}	
\end{proposition}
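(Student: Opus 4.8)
The plan is to run the same ``differentiate the ODE twice and read off signs'' machinery that drove \cref{prop:3-1,prop:3-2,prop:3-3,prop:3-4,prop:3-5,prop:3-6}, now exploiting the single sign change of $g^{(4)}$ at $\xi_0$. Since the hypothesis of \cref{prop:3-7} is exactly the conclusion of \cref{prop:3-3}(b), we have $h_4=g^{(4)}<0$ on $(0,\xi_0)$ and $h_4>0$ on $(\xi_0,\infty)$, hence $h_3=g'''$ is strictly decreasing on $[0,\xi_0]$ and strictly increasing on $[\xi_0,\infty)$, so $\xi_0$ is the unique global minimum of $g'''$. In parts (b)--(c) we are told in addition that $g'''<0$ on $[\xi_0,\infty)$; combined with $g'''(0)=B>0$ this produces a unique $p\in(0,\xi_0)$ with $g'''>0$ on $[0,p)$ and $g'''<0$ on $(p,\infty)$, so that $h_2=g''$ increases on $[0,p]$ from $g''(0)=A<0$ and decreases on $[p,\infty)$. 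All three parts then hinge on the identity coming from \eqref{eqn:3-4} at a critical point of $g$: if $g'(\zeta)=0$, then $g^{(4)}(\zeta)=-g(\zeta)g'''(\zeta)$, so the sign of $g(\zeta)$ is pinned down by the (known) signs of $g^{(4)}(\zeta)$ and $g'''(\zeta)$.

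Part (a) is quick. For $x_0\ge\xi_0$, monotonicity of $g'''$ on $[\xi_0,\infty)$ gives $g'''(\xi)\ge g'''(x_0)\ge 0$ for all $\xi\ge x_0$, while $g^{(4)}(\xi)>0$ there; a putative $\zeta>x_0$ with $g'(\zeta)=0$ and $g(\zeta)>0$ would then violate $g^{(4)}(\zeta)=-g(\zeta)g'''(\zeta)$, the left side being positive and the right side $\le 0$. The ``in particular'' clause is the case $x_0=\xi_0$.

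For part (c) I first locate the zeros of $g''$ and $g'$. Since $g''(\alpha)=0$ with $\alpha>\xi_0>p$ and $g''$ is decreasing on $[p,\infty)$, we get $g''(p)>0$, so $g''$ has exactly the two zeros $\eta_1\in(0,p)$ and $\alpha$, is positive on $(\eta_1,\alpha)$ and negative outside. Hence $g'$ decreases on $[0,\eta_1]$, increases on $[\eta_1,\alpha]$ and decreases on $[\alpha,\infty)$; with $g'(0)=0$ and the hypothesis $g'(\alpha)>0$ this gives a zero $\gamma\in(\eta_1,\alpha)$ of $g'$, and since $g''<0$ is bounded away from $0$ for $\xi$ large, $g'\to-\infty$, producing a unique further zero $\zeta>\alpha$ of $g'$ and no others in $(0,\zeta)$. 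Applying \eqref{eqn:3-4} at $\zeta$ (where $g^{(4)}>0$, $g'''<0$, $g'=0$) forces $g(\zeta)>0$. For the sign of $g(\gamma)$: evaluating \eqref{eqn:3-4} at $p$, where $g'''(p)=0$, $g^{(4)}(p)<0$, $g''(p)>0$, gives $g'(p)g''(p)=g^{(4)}(p)<0$, so $g'(p)<0$; since $g'$ is increasing on $[\eta_1,\alpha]\supseteq[p,\gamma]$ with $g'(p)<0=g'(\gamma)$, we get $\gamma>p$ and hence $g'''(\gamma)<0$; then \eqref{eqn:3-4} at $\gamma$ reads $g^{(4)}(\gamma)=-g(\gamma)g'''(\gamma)$, and the sign of $g^{(4)}(\gamma)$ (negative if $\gamma<\xi_0$, positive if $\gamma>\xi_0$) yields $g(\gamma)<0$, resp.\ $g(\gamma)>0$.

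Part (b) is the delicate one. If $g''(p)\le 0$, the ``only one zero'' hypothesis forces $g''(p)=0$ with $g''\le 0$ throughout, so $g'\le 0$ with $g'(0)=0$ and there is no admissible $\zeta$. If $g''(p)>0$, then $g''$ has its single zero $\eta_1\in(0,p)$, stays positive afterwards, and tends to some $c\ge 0$. Suppose for contradiction that $\zeta>\xi_0$ satisfies $g'(\zeta)=0$; then $g'<0$ on $(0,\zeta)$, $g'>0$ on $(\zeta,\infty)$, so $\zeta$ is the global minimum of $g$, and \eqref{eqn:3-4} at $\zeta$ forces $g(\zeta)>0$, hence $g>0$ on $[0,\infty)$. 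Since $g'''<0$ is increasing on $[\xi_0,\infty)$ and $g''$ converges, $g'''$ is integrable near $+\infty$, so $g'''\to 0^-$, and then $g^{(4)}$ is integrable near $+\infty$ with $\int_{\xi_0}^{\infty} g^{(4)}=|g'''(\xi_0)|$. Now \eqref{eqn:3-4} gives $g^{(4)}=g'g''+g\,|g'''|\ge g'g''$ on $(\zeta,\infty)$; if $c>0$ then $g'\to+\infty$ (because $g''\ge c$), so $g^{(4)}\ge c\,g'\to\infty$, impossible for an $L^1$ function. This leaves the case $c=0$, i.e.\ $g''\to 0^+$ (and $g\to\infty$, since $g'\to d>0$), where I will combine \eqref{eqn:3-5} in the form $\big(e^{\int_0^\xi g}\,g^{(4)}\big)'=e^{\int_0^\xi g}(g'')^2$ with its integrated consequence $\int_{\xi_0}^{\xi} g\,g^{(4)}+g^{(4)}(\xi)=\int_{\xi_0}^{\xi}(g'')^2$ (which, via $g\ge g(\zeta)>0$ and the integrability of $g^{(4)}$, already forces $(g'')^2\in L^1$), and extract a contradiction from the resulting constraints on the decay rates of $g''$, $g'''$, $g^{(4)}$. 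Showing that $g''\to 0$ cannot coexist with $g>0$, $g\to\infty$ and $g^{(4)}\in L^1$ is the main obstacle; everything else is bookkeeping with \eqref{eqn:3-4}, \eqref{eqn:3-5} and the sign chart.
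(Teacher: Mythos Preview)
Your arguments for (a) and (c) are correct and track the paper's proof essentially line for line (your $p$ is the paper's $x_1$, your $\eta_1$ is the paper's $\beta$).

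For (b) there is a genuine gap. Your case split on the sign of $g''(p)$ is sound, and the $g''(p)\le 0$ subcase is fine. In the $g''(p)>0$ subcase your $c>0$ argument is essentially right, but should be phrased as
\[
\int_\zeta^T g^{(4)}\ \ge\ \int_\zeta^T g'g''\ \ge\ c\bigl(g(T)-g(\zeta)\bigr)\ \to\ \infty,
\]
not as ``$g^{(4)}\to\infty$, impossible for $L^1$'' (a positive $L^1$ function can blow up). The $c=0$ sub-subcase, however, you explicitly leave open, and your sketched route is not on solid ground: the integrated identity $g^{(4)}(\xi)+\int_{\xi_0}^{\xi} g\,g^{(4)}=\int_{\xi_0}^{\xi}(g'')^2$ is correct, but it does \emph{not} force $(g'')^2\in L^1$, because $g\to\infty$ and nothing so far bounds $\int g\,g^{(4)}$. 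What you \emph{can} extract cleanly in this subcase is $g'\to d\in(0,\infty)$ (from $\tfrac12(g'^2)'=g'g''\le g^{(4)}\in L^1$), $g'''\to 0^-$, $g\,g''\to 0$, and $\xi\,g''\to 0$; but these alone do not yield a contradiction, and you have not supplied the missing idea.

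For comparison, the paper's proof of (b) is much shorter and avoids asymptotics: after Rolle gives a zero $\alpha\in(0,\zeta)$ of $g''$, the paper asserts that $g''(\xi)<0$ for all $\xi\ne\alpha$, so $\alpha$ is the global maximum of $g''$; then $g'''(\alpha)=0$, $g^{(4)}(\alpha)\le 0$, while \eqref{eqn:3-4} at $\alpha$ gives $g^{(4)}(\alpha)=0$, forcing $\alpha=\xi_0$ and contradicting $g'''(\xi_0)<0$. Note, though, that the paper's assertion ``$g''<0$ for $\xi\ne\alpha$'' is exactly your subcase $g''(p)\le 0$; it is not justified against your $g''(p)>0$ subcase (where $g''$ crosses zero upward at $\eta_1$ and stays positive). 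So the paper's argument, as written, is incomplete at precisely the point where yours is; neither proof closes the $g''(p)>0$, $c=0$ case.
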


\begin{proof}
	(a) Assume that there is some point $\zeta>x_0$ such that $g'(\zeta)=0$ and $g(\zeta)>0$, since $h_4(\xi)=g^{(4)}(\xi)>0$ for all $\xi>\xi_0$ and $g'''(x_0)\geq0$, then $h_3(\zeta)>0$. Since $h_4(\xi)>0$ for all $\xi>\xi_0$, by equation \eqref{eqn:3-4}, then $h_4(\zeta)=h_1(\zeta)h_2(\zeta)-g(\zeta)h_3(\zeta)=-g(\zeta)h_3(\zeta)>0$. Since $g(\zeta)>0$, then $h_3(\zeta)<0$, this leads to a contradiction.
			
	(b) Assume that there is some point $\zeta>\xi_0$ such that $g'(\zeta)=0$ and $g(\zeta)>0$, since $g'(0)=0$, by Lagrange's mean value theorem, then there exists some $\alpha\in(0,\zeta)$ such that $h_2(\alpha)=g''(\alpha)=0$. Since $g''(0)=A<0$ and $g''(\xi)$ has only one zero on $[0, \infty)$, then $h_2(\xi)<0$ for all $\xi\neq\alpha$, which implies that $\alpha$ is the unique global maximum point of $g''(\xi)$. Hence $h_3(\alpha)=0$ and $h_4(\alpha)\leq0$. Since $g^{(4)}(\xi)<0$ for all $\xi\in(0,\xi_0)$ and $g^{(4)}(\xi)>0$ for all $\xi>\xi_0$, then $0<\alpha\leq \xi_0$. Since $h_3(\alpha)=0$ and $h_3(\xi)=g'''(\xi)<0$ for all $\xi\geq\xi_0$, then $0<\alpha<\xi_0$, which implies that $h_4(\alpha)<0$. On the other hand, since $h_2(\alpha)=h_3(\alpha)=0$, by equation \eqref{eqn:3-4}, then $h_4(\alpha)=h_1(\alpha)h_2(\alpha)-g(\alpha)h_3(\alpha)=0$, this leads to a contradiction.
	 
	 (c) Since $g^{(4)}(\xi)<0$ for all $\xi\in(0,\xi_0)$ and $g^{(4)}(\xi)>0$ for all $\xi>\xi_0$, then $h_3(\xi)$ is strictly decreasing on $[0,\xi_0]$ and strictly increasing on $[\xi_0,+\infty)$, which implies that $\xi_0$ is the unique global minimum point of $h_3(\xi)$ on $[0,+\infty)$. Since $h_3(\xi_0)<0$, $h_3(0)=B>0$ and $h_4(\xi)<0$ for all $0\leq \xi<\xi_0$, then there exists a unique $x_1\in(0,\xi_0)$ such that $h_3(\xi)>0$ for all $0\leq \xi<x_1$ and $h_3(\xi)<0$ for all $x_1<\xi<\xi_0$. Since $h_3(\xi)<0$ for all $\xi\geq\xi_0$, then $h_3(\xi)>0$ for all $0\leq \xi<x_1$ and $h_3(\xi)<0$ for all $\xi>x_1$, which implies that $h_2(\xi)$ is strictly increasing on $[0,x_1]$ and strictly decreasing on $[x_1,+\infty)$. Since $h_2(0)=A<0$ and $h_2(\alpha)=0$, then $h_2(\xi)$ has either $1$ or $2$ zeros on $[0,+\infty)$. If $h_2(\xi)$ has only one zero on $[0,+\infty)$, since $h_2(0)=A<0$ and $h_2(\alpha)=0$, then $\alpha$ is the only global maximum point of $h_2(\xi)$ on $[0,+\infty)$. Since $h_2(\xi)$ is strictly increasing on $[0,x_1]$ and strictly decreasing on $[x_1,+\infty)$, then $\alpha=x_1$, which implies that $\alpha=x_1<\xi_0$, this contradicts $\alpha>\xi_0$. If $h_2(\xi)$ has two zeros on $[0,+\infty)$, then there exists a unique $\beta\in(0,x_1)$ such that $h_2(\xi)>0$ for all $\beta<\xi<\alpha$ and $h_2(\xi)<0$ for all $\xi\in(0,\beta)\cup(\alpha,+\infty)$.  Since $h_2(\xi)<0$ for all $\xi>\alpha$ and $h_3(\xi)<0$ for all $\xi>x_1$, then $\displaystyle \lim_{\xi\rightarrow+\infty} h_1(\xi)=-\infty$. Since $h_1(\alpha)=g'(\alpha)>0$ and $h_2(\xi)<0$ for all $\xi>\alpha$, then there exists a unique $\zeta>\alpha$ such that $h_1(\zeta)=g'(\zeta)=0$. Since $\zeta>\alpha>\xi_0$ and $h_4(\xi)>0$ for all $\xi>\xi_0$, by equation \eqref{eqn:3-4}, then $h_4(\zeta)=h_1(\zeta)h_2(\zeta)-g(\zeta)h_3(\zeta)=-g(\zeta)h_3(\zeta)>0$. Since $\zeta>\alpha>\xi_0$ and $h_3(\xi)<0$ for all $\xi\geq\xi_0$, then $g(\zeta)>0$.
	 
	   Since $h_1(0)=0$ and $h_2(\xi)<0$ for all $0<\xi<\beta$, then $h_1(\xi)<0$ for all $0<\xi\leq\beta$. Since $h_1(\alpha)>0$ and $h_2(\xi)$ is strictly increasing for all $\beta<\xi<\alpha$, then there must exist a point $\gamma \in(\beta, \alpha)$ such that $h_1(\gamma)=0$ and $h_2(\gamma)>0$. Since $0<x_1<\xi_0$ and $h_4(x_1)<0$, by equation \eqref{eqn:3-4}, then $h_4(x_1)=h_1(x_1)h_2(x_1)-g(x_1)h_3(x_1)=h_1(x_1)h_2(x_1)<0$. Since $h_2(x_1)>0$, then $h_1(x_1)<0$. Hence, it is obvious that $x_1<\gamma<\alpha$ and $h_3(\gamma)<0$. Since $h_1(\xi)<0$ for all $0<\xi<\gamma$ and $h_1(\xi)>0$ for all $\gamma<\xi<\zeta$, then  point $\gamma$ is the minimum value of $g(\xi)$ on $[0, \zeta]$.    Now let's decide the sign of $g(\gamma)$. Since $x_1<\xi_0<\alpha$,  two cases will arise:  
	   
	       $1)$  If $x_1<\gamma<\xi_0$, by equation  \eqref{eqn:3-4}, then $h_4(\gamma)=h_1(\gamma)h_2(\gamma)-g(\gamma)h_3(\gamma)=-g(\gamma)h_3(\gamma)<0$,  then $g(\gamma)<0$. Therefore, in this case, there exists a solution of (\ref{ode1}) and (\ref{bc}).  We will designate this solution as type $II$ solution.

              $2)$  If $\xi_0<\gamma<\alpha$, by equation  \eqref{eqn:3-4}, then $h_4(\gamma)=h_1(\gamma)h_2(\gamma)-g(\gamma)h_3(\gamma)=-g(\gamma)h_3(\gamma)>0$, then $g(\gamma)>0$. Therefore, in this case, there exists a solution of (\ref{ode1}) and (\ref{bc}).  We will designate this solution as type $III$ solution.
\end{proof}

\section{Numerical multiple solutions}
\label{num sol}

Numerical solutions to  equation (\ref{ode1}) subject to  (\ref{bc}) are considered  for all non-negative Reynolds number, $0\leq R<\infty$. All the numerical results are based on the collocation method (e.g. MATLAB boundary value problem solver bvp4c in which we set the relative error tolerance $10^{-10}$).  The numerical results  are shown  in \cref{fig:multiplesolutions} with a plot of skin friction  at the lower wall $-f''(-1)$ versus  Reynolds number $R$.  It can be seen that  in the range of $14.10<R<\infty$ there are three types of solutions for each value of $R$, while only single solution is observed for $0\leq R<14.10$. The solution curves have been labeled $I$, $II$ and $III$ (which  correspond to the three theoretical solutions  in \cref{sec:existence}) suggesting three completely different types of solutions.  For the symmetric flow in a channel, although there are  three types of solutions, two of them have only an exponentially small difference when $R$ is large \cite{robinson1976existence,brady1981steady}.

 \begin{figure}[tbhp] 
\centering
\includegraphics[width=0.4\textwidth,height=0.3\textwidth]{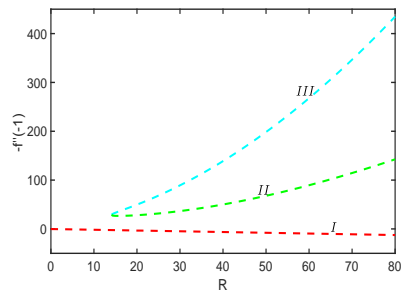}
\caption{ Skin friction at the lower wall versus crossflow Reynolds number for the asymmetric flow through a uniformly porous channel.}  
\label{fig:multiplesolutions}
\end{figure}

Typical velocity profiles of type $I$ solution, i.e. $v\sim f(y)$ and $u\sim f'(y)$,  are shown in \cref{type1vu}. As can be seen from \cref{type1u},  the flows form a  thin boundary layer structure near the lower wall of the channel for the relatively high Reynolds number. The increasing Reynolds number has little influence on the flow character, but the boundary layer is thinner and thinner with the increasing $R$.

\begin{figure}[tbhp]
\centering
\subfloat[Transverse velocity of type $I$]{\includegraphics[width=0.4\textwidth]{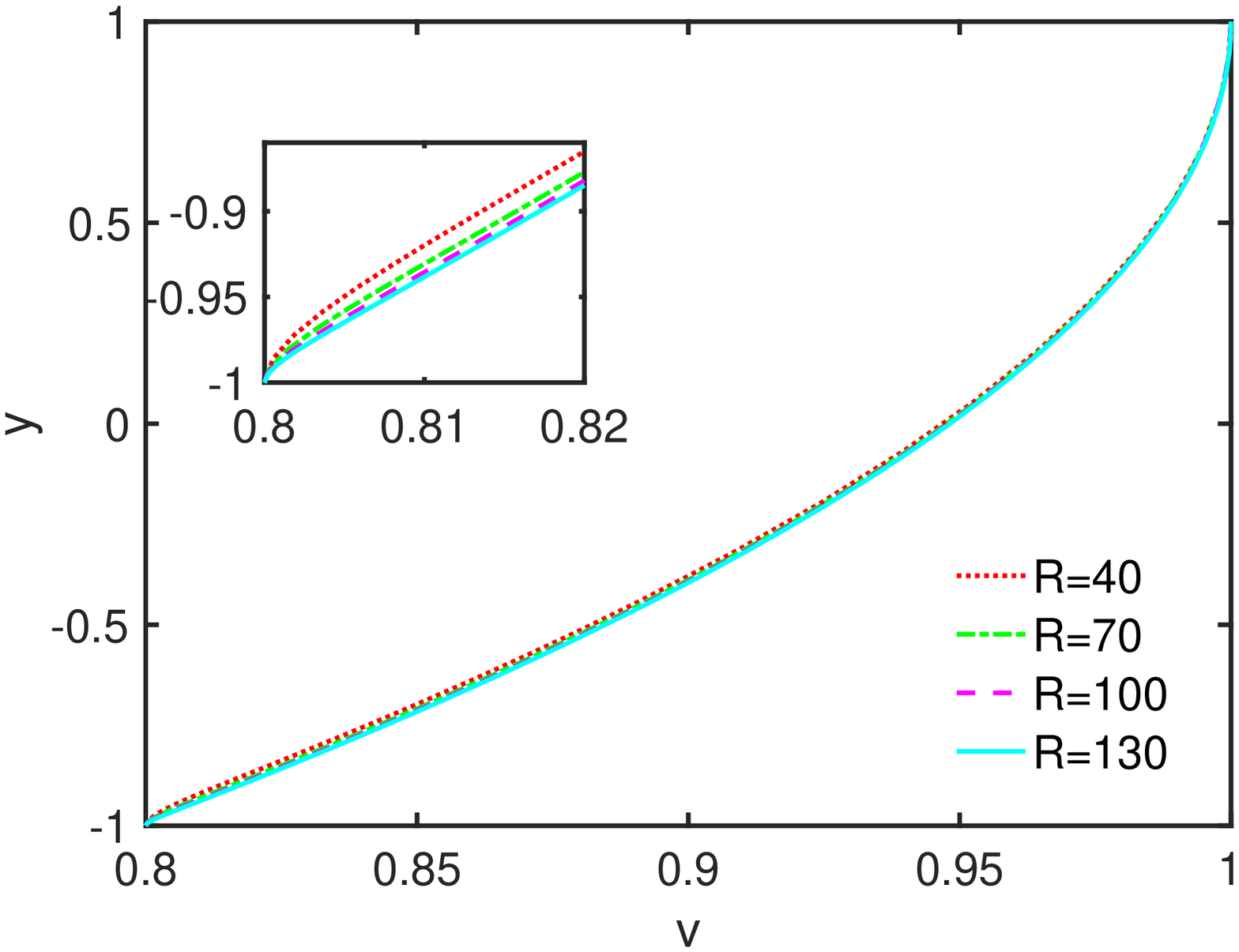}}
\subfloat[Streamwise velocity of type $I$]{\includegraphics[width=0.4\textwidth]{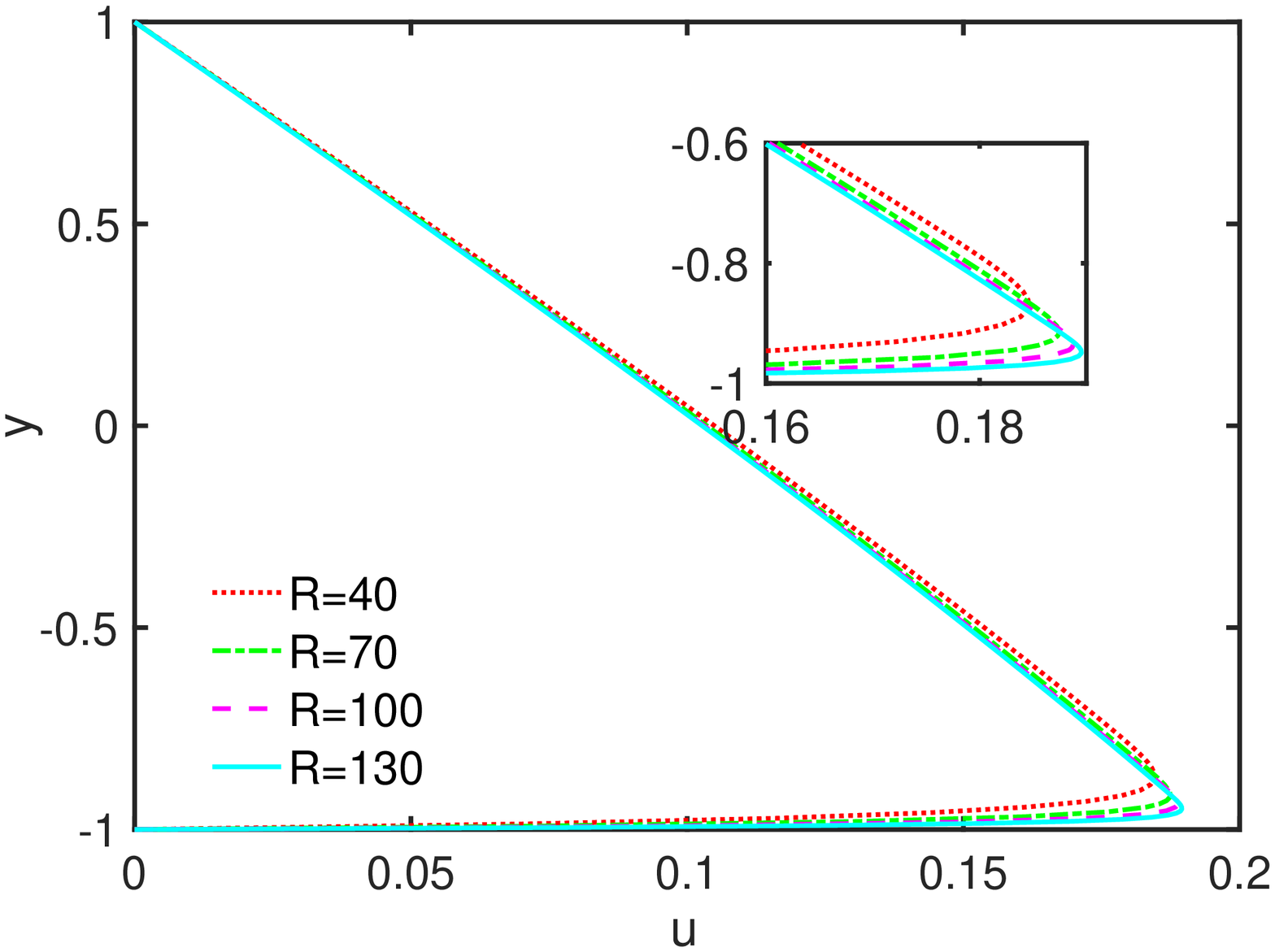}\label{type1u}}
\caption{Velocity profiles of type $I$  at $a=0.8$ with $R=40$,  $R=70$, $R=100$ and $R=130$.} 
\label{type1vu}
\end{figure}

Typical velocity profiles for type $II$ solution  are presented in \cref{type2vu}. All of these flows occur as $R>14.10$.  As $R$ is increased, the minimum  of  transverse velocity in the reverse region is decreasing and the turning points which are the points such that $f(y)=0$ are moving towards the walls of the channel.   The maximum  of streamwise velocity is increasing and the minimum is decreasing with the increasing $R$. 

\begin{figure}[tbhp]
\centering
\subfloat[Transverse velocity of type $II$]{\includegraphics[width=0.4\textwidth]{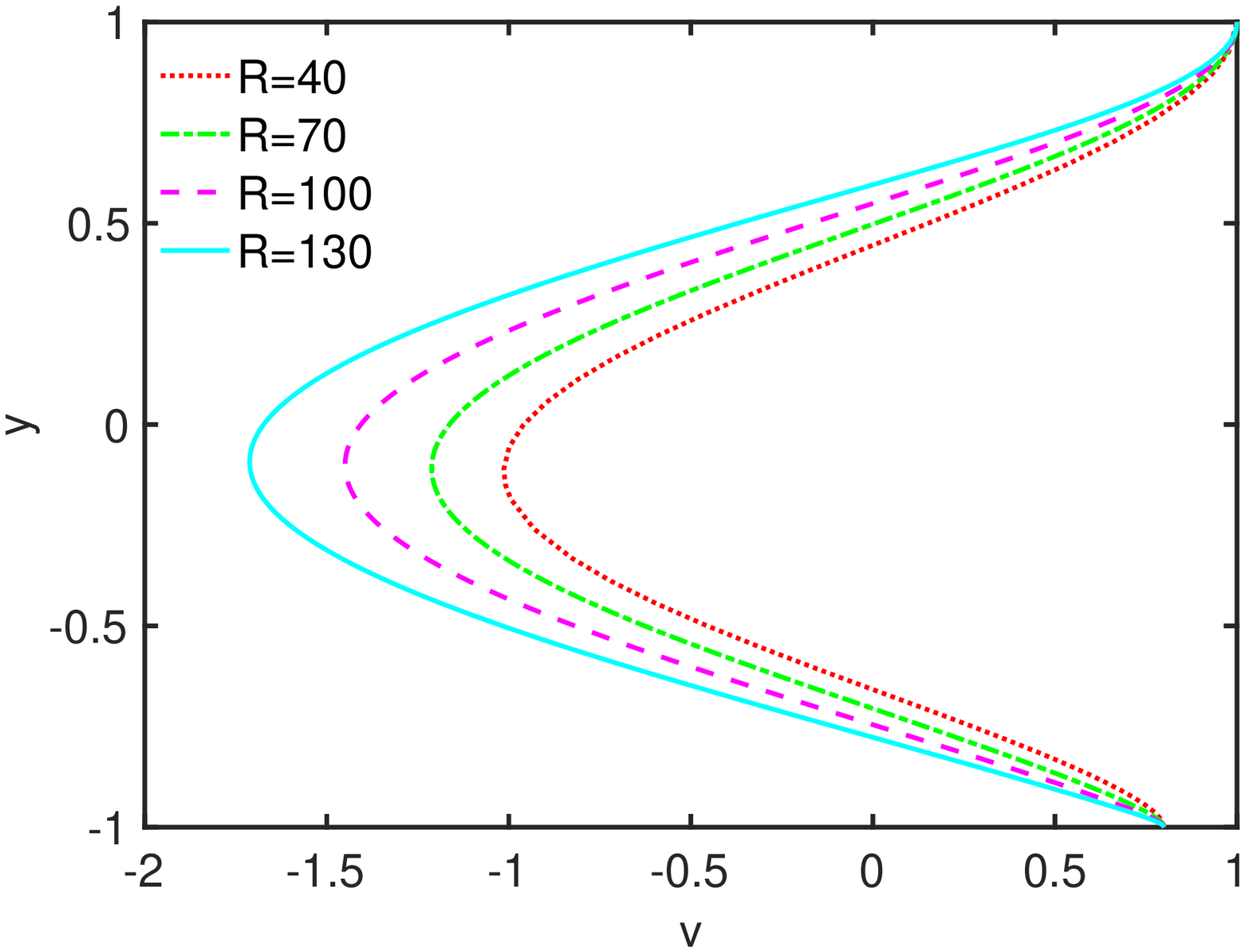}\label{type2v}} 
\subfloat[Streamwise velocity of type $II$]{\includegraphics[width=0.4\textwidth]{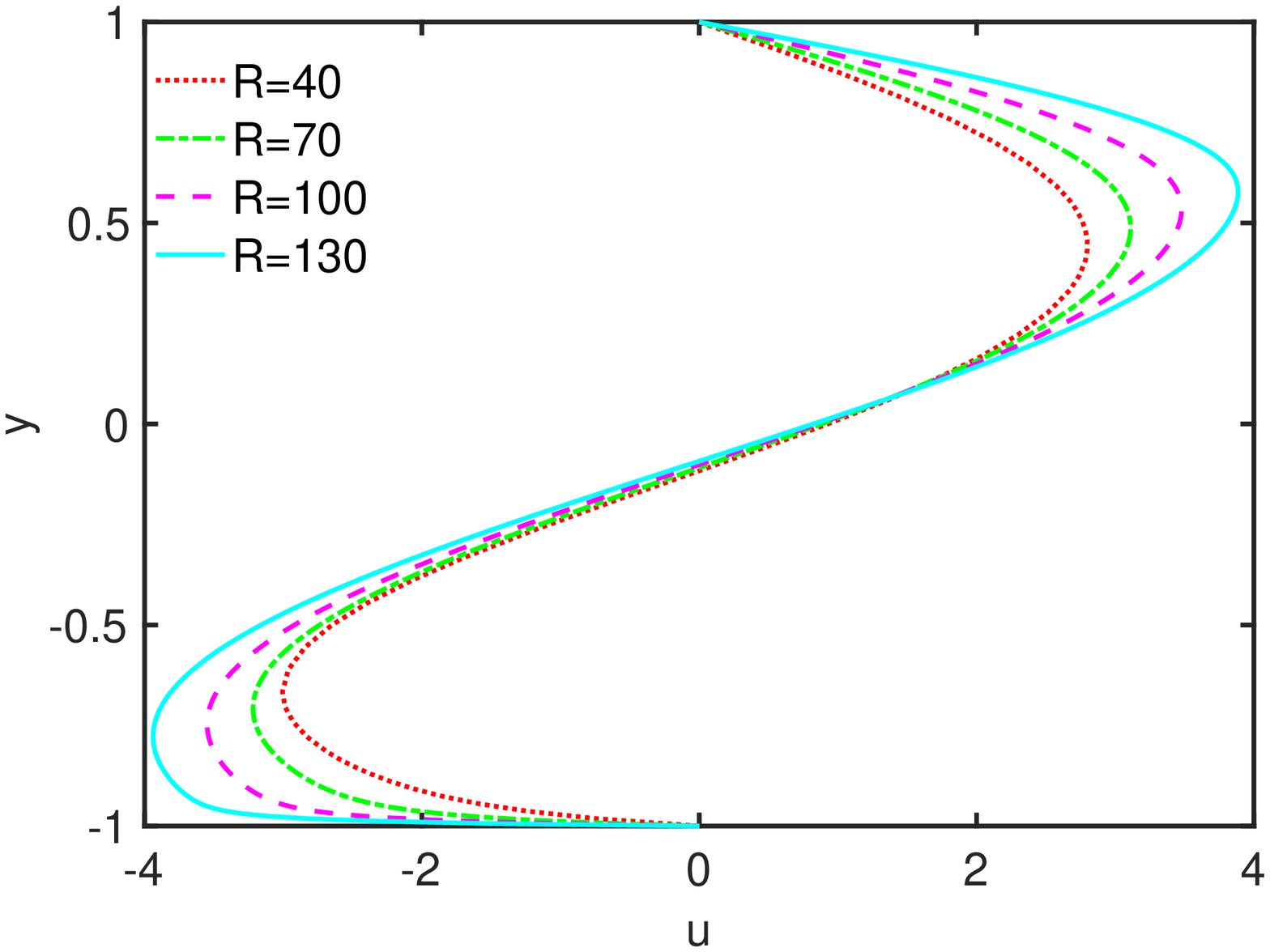}}
\caption{Velocity profiles of type $II$  at $a=0.8$ with $R=40$,  $R=70$, $R=100$ and $R=130$.} 
\label{type2vu}
\end{figure}

The type $III$ solutions, shown in \cref{type3vu}, have  an unusual shape. The rapid decay occurs not only for the streamwise velocity but also for the transverse velocity near the lower wall. With the increasing $R$, the region between the lower wall and the minimum velocity become thinner. There is a region of reverse flow near the lower wall for the streamwise velocity.

\begin{figure}[tbhp]
\centering
\subfloat[Transverse velocity of type $III$]{\includegraphics[width=0.4\textwidth]{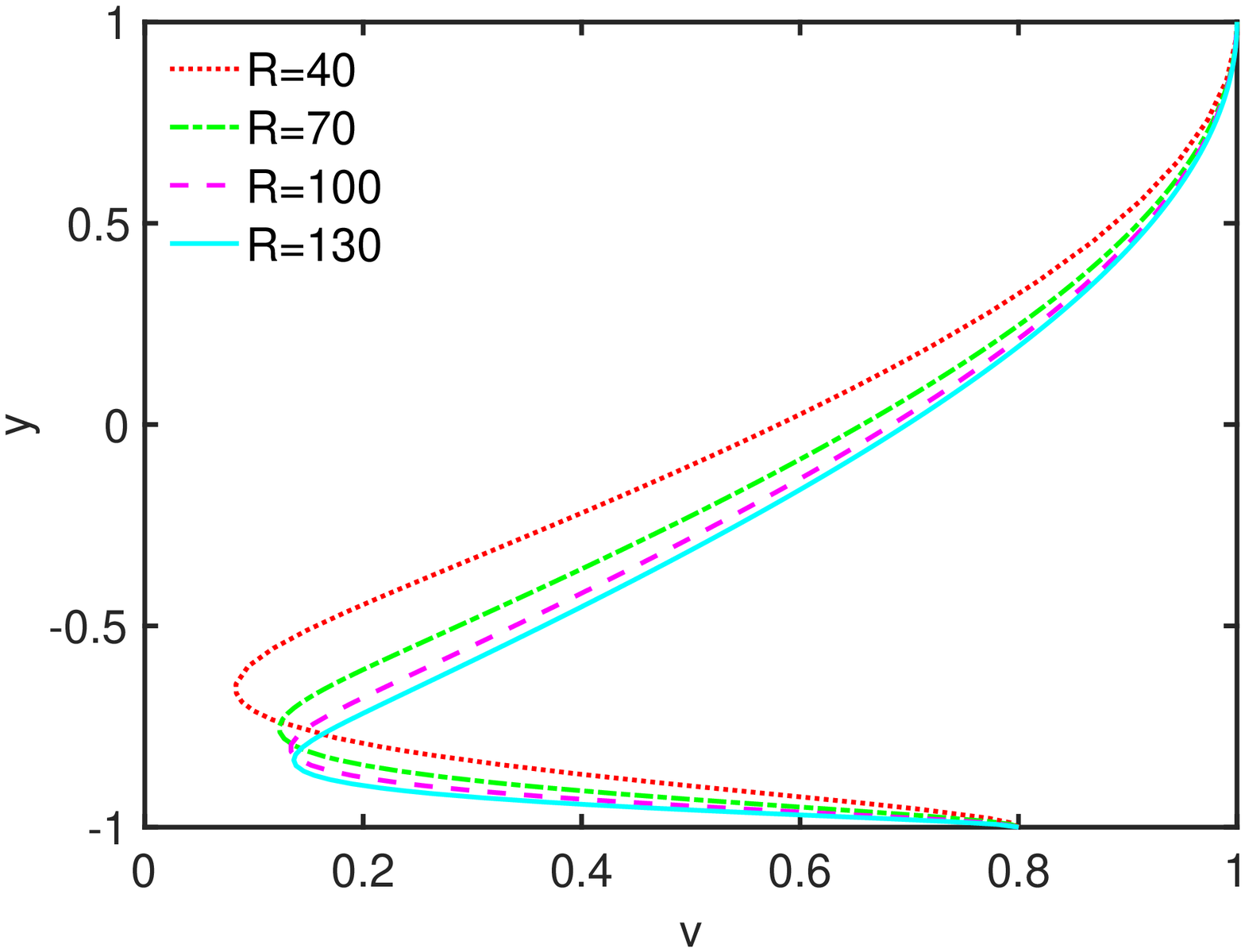}}
\subfloat[Streamwise velocity of type $III$]{\includegraphics[width=0.4\textwidth]{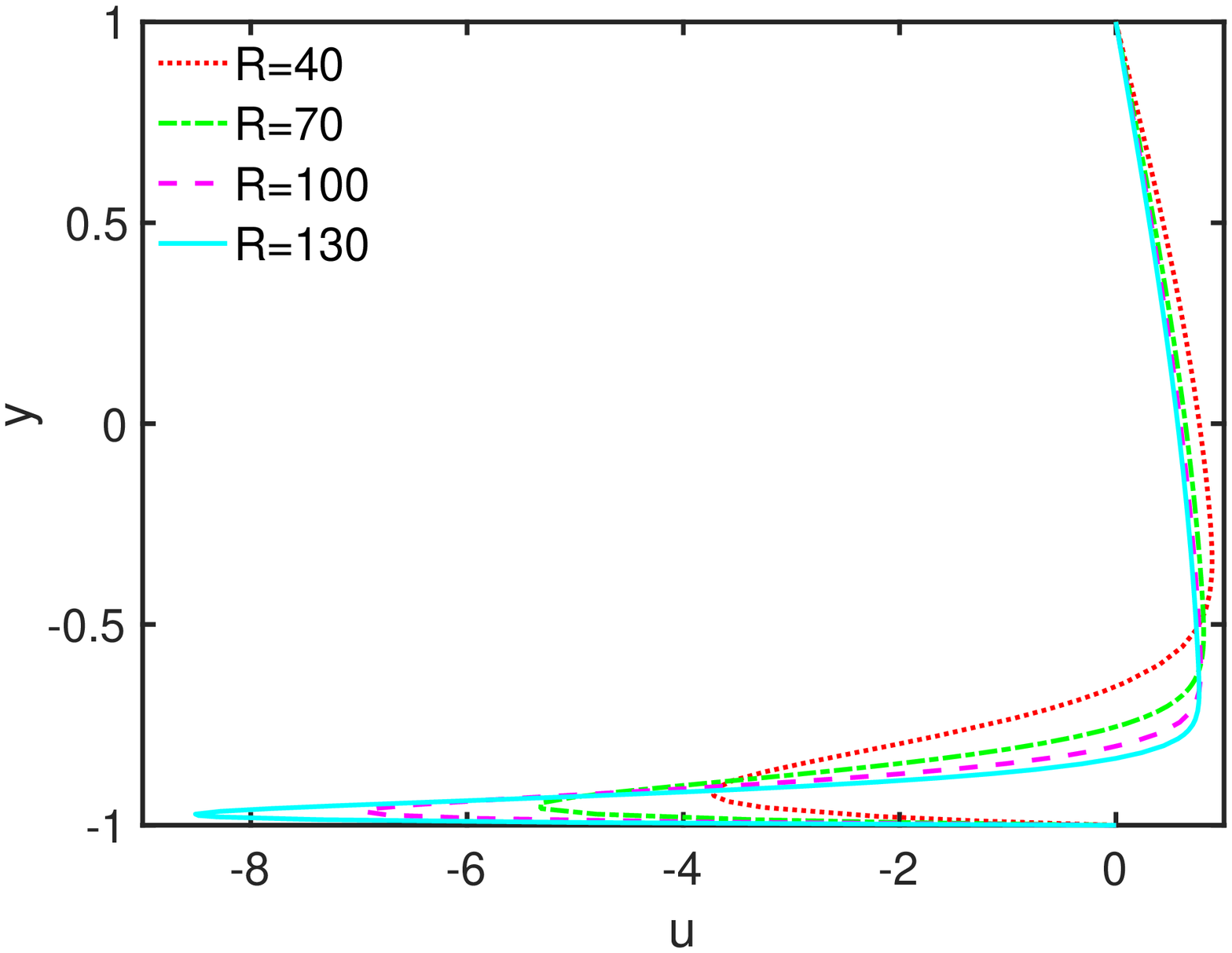}}
\caption{Velocity profiles of type $III$  at $a=0.8$ with $R=40$,  $R=70$, $R=100$ and $R=130$.} 
\label{type3vu}
\end{figure}

All numerical results indicate that the solutions consist of inviscid solution and boundary layer solution which is confined to the viscous layer near the lower wall of the channel. It is obvious that  the flow direction of streamwise velocity inside the boundary layers for type $II$ and type $III$ is opposite to the type $I$.  The reversal flow occurs for both type $II$ and type $III$.

In an effort  to develop a better understanding of the flow character,  we show in \cref{streamlines} sketches of the streamlines to describe the flow behaviour corresponding to the different branches of solutions. These graphs depict all three types of solutions and enable us to deduce their fundamental characteristics. 
\begin{figure}[H]
\centering
\subfloat{\includegraphics[width=0.8\textwidth,height=0.2\textwidth]{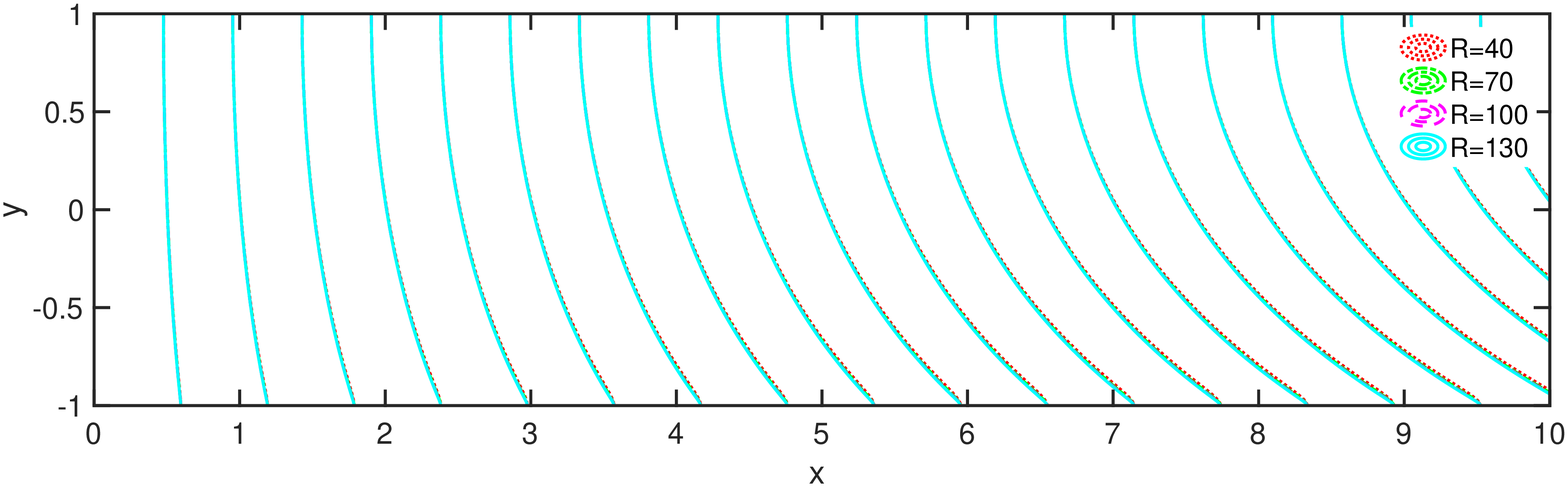}}\\
\subfloat{\includegraphics[width=0.8\textwidth,height=0.2\textwidth]{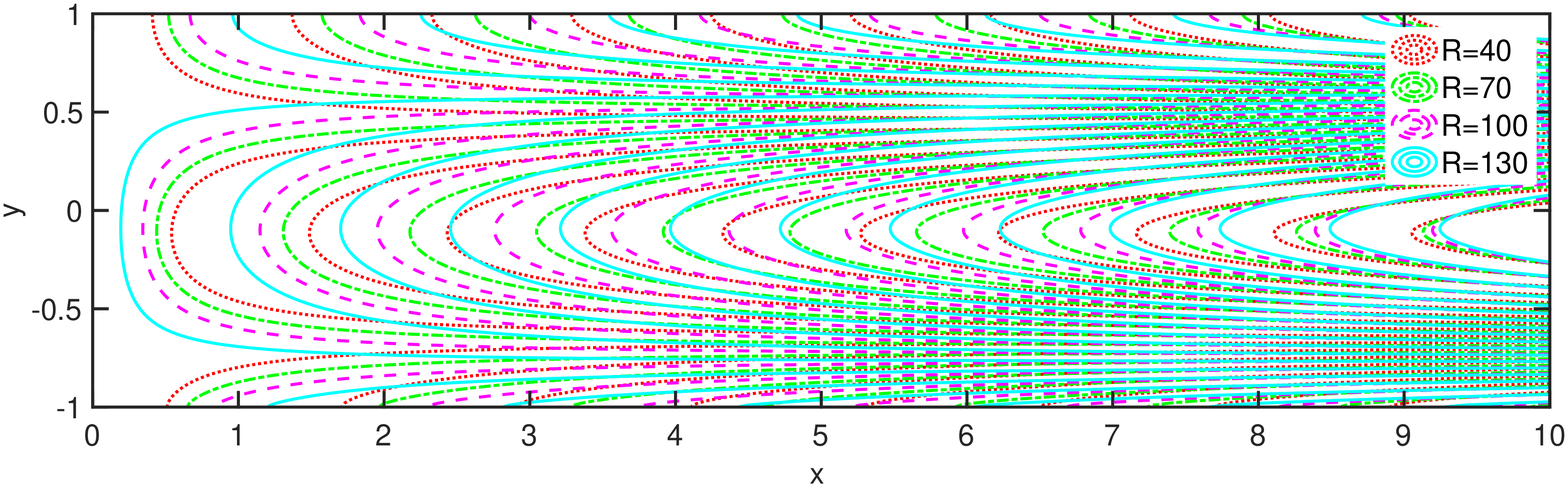}}\\
\subfloat{\includegraphics[width=0.8\textwidth,height=0.2\textwidth]{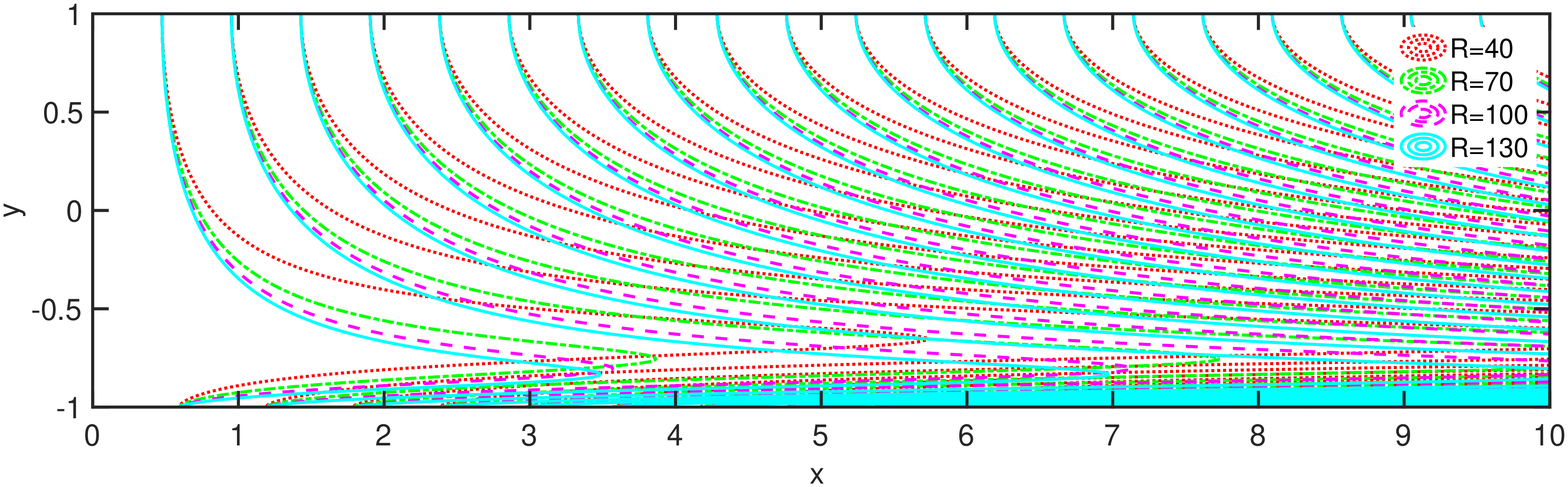}}
\caption{Streamline patterns of types $I$, $II$ and $III$ solutions from top to bottom at $a=0.8$  with  $R=40$, $R=70$, $R=100$ and $R=130$.} 
\label{streamlines}
\end{figure}

\section {Asymptotic multiple solutions for high Reynolds number $R$}
\label{asy sol}
We have shown the existence of multiple solutions and from the numerical solutions we  know that when $R$ is relatively large, there exists three solutions. Since the upper wall is with injection while the lower wall is with suction which indicates that the flow may exhibit a  boundary layer structure near the lower wall for high Reynolds number,  it is of considerable theoretical interest to construct asymptotic solution for the three types solutions which can help us to develop a better understanding of  the characteristics of boundary layer.  
 
By treating $\epsilon=\frac{1}{R}$  as a small perturbation parameter, equation (\ref{ode1}) can be written as 
\begin{equation}
\epsilon f'''+(ff''-f'^2)=k,   \label{odef}
\end{equation}
where $k=K/R$.

\subsection{Asymptotic solution of type $I$}
From the numerical solution of type $I$ in \cref{type1vu}, we can see that the streamwise velocity rapidly decays near the lower wall ($y=-1$). Hence, by the method of boundary layer correction,  $f(y)$ and $k$ can be expanded as follows
\begin{align}
&f(y)=f_0(y)+\epsilon(f_1(y)+h_1(\eta))+\epsilon^2(f_2(y)+h_2(\eta))+\cdots,      \label{fexp}\\
&k=k_0+\epsilon k_1+\epsilon^2 k_2+\cdots,   \label{kexp}
\end{align}
where $\eta=\frac{1+y}{\epsilon}$ is a stretching transformation near   $y=-1$ and $h_i(\eta)$,  $i=1, 2\cdots$ are boundary layer functions.
By substituting (\ref{fexp}) into (\ref{bc}) and collecting the equal power of $\epsilon$, the  boundary conditions become
\begin{align}
&f_0|_{y=1}=1,\quad f'_{0}|_{y=1}=0, \quad f_0|_{y=-1}=a,    \label{bcf0}\\
&f'_{i-1}|_{y=-1}+\dot{h}_i|_{\eta=0}=0,\quad i=1,2,\cdots,  \label{bchi}\\
&f_i|_{y=1}=0,\quad f'_{i}|_{y=1}=0,\quad f_i|_{y=-1}+h_i|_{\eta=0}=0,\quad i=1,2,\cdots,  \label{bcfi} 
\end{align}
where $\dot{h}_i$ denotes the derivative of $h_i$ with respect to $\eta$.  We note here that $f_0(\eta)$ is the solution of the reduced problem
\begin{equation}
f_0 f_0''-f_0'^2=k_0  \label{f0}
\end{equation}
satisfying boundary conditions (\ref{bcf0}).\\
The construction is similar to that of subsection $4.1$ in \cite{hongxia}, where additional factors such as a magnetic force and a boundary expansion rate are considered. So we omit the details here and only provide the asymptotic solution of $(\ref{ode1})$ and (\ref{bc})  for type $I$ solution 
\begin{align}
f(y)=&\cos z+\epsilon \{(Q(z)+b)\sin z +\frac{\lambda}{2b} (z \sin z +\cos z)+  \frac{\lambda}{2b}+\frac{b}{2}(z {\tan}^2 z- \tan z)
{}\nonumber \\
&{}+ \frac{b}{2}(\ln(1-\sin z)- \ln \cos z)(z \sin z+\cos z)+\frac{b}{a}\sin2b \cdot e^{-a\eta}\}+O(\epsilon^2), \label{type1}
\end{align}
where $\eta=\frac{1+y}{\epsilon}$, $z=by-b$, $b=\frac{\cos^{-1}a}{2}$, $Q(z)= b\int_{0}^{z}  \phi \sec \phi(1- \sec^2 \phi )\mathrm{d}\phi$ and
\begin{align}
 \lambda = & \frac{1}{2 a (b \sin{(2b)}+\cos{(2b)})}( 2(b-ab-a  Q(-2b))\sin{(2b)}  +a b \tan {(2b)} \nonumber \\
 &-2 a b^2\tan^2{(2b)}+a b(\cos{(2b)}+2 b \sin{(2b)}) (\ln(1+\sin{(2b)})-\ln\cos{(2b)})). \nonumber
\end{align}

Terrill \cite{terrill1966laminar} considered a similar case where the lower wall is with injection and the upper wall is with suction and  constructed an asymptotic solution of the type $I$ solution  with the method of matched asymptotic expansion. 
of the construction of type $I$ solution, the details are similar to those of Subsection $4.1$ in \cite{hongxia}.

\subsection{Asymptotic solution of type $II$}
Constructing  an asymptotic expansion as $R\rightarrow\infty$ for the solution of type $II$ is  a more complicated process than that presented in the previous subsection. From the numerical solution of type $II$ in \cref{type2vu}, we know that $f(y)$ vanishes at exactly two points $y_1$ in $(-1,0)$ and $y_2$ in $(0,1)$ (called turning points  \cite{macgillivray1994asymptotic}). The technique used in this section follows the symmetric flow case in \cite{lu1997asymptotic,lu1999matched,macgillivray1994asymptotic} where there exists only one turning point. 

Define that the distance between $y=-1$ and $y=y_1$ is $\Delta_1$ and the distance between $y=1$ and $y=y_2$ is $\Delta_2$, hence, it follows that $y_1=-1+\Delta_1$ and $y_2=1-\Delta_2$ which are unknown a priori.
By differentiating $(\ref{odef})$, we obtain 
\begin{equation}
\epsilon f^{iv}+(ff'''-f'f'')=0.  \label{ode4f}
\end{equation}
$\mathbf{1)}$ Asymptotic solution between the turning points $y_1$ and $y_2$

Letting $\epsilon=0$,   equation (\ref{ode4f}) become
\begin{equation}
ff'''-f'f''=0. 
\end{equation}
We observe three types of solutions for the equation: $cy$, $c\sinh (dy+e)$ and $c\sin(dy+e)$.  But, to have the solution be valid uniformly in $[y_1, y_2]$  and satisfy the conditions $f(y_1)=0$ and $f(y_2)=0$, the following has to hold:
\begin{subequations}
\begin{align}
f(y)&\sim \Lambda\sin \frac{\pi}{2-\Delta_1-\Delta_2}(y-(-1+\Delta_1)) \label{fleft} \\
&= -\Lambda\sin \frac{\pi}{2-\Delta_1-\Delta_2}(y-(1-\Delta_2)),  \label{fright}
\end{align}
\end{subequations}
where $\Lambda<0$ is a constant. \cref{type2v} shows that the turning points $y_1$ and $y_2$ are moving towards the left-end point and the right-end point of the interval $[-1, 1]$, respectively, with  increasing $R$. The quantities $\Delta_1$, $\Delta_2$ and $\Lambda$ which are related to $\epsilon$, will be determined by matching  as $\epsilon\rightarrow0$.\\
$\mathbf{2)}$ Asymptotic solution near $y=y_1$ and inner solution near $y=-1$ 

We introduce a variable  transformation 
\begin{equation}
\tau=\frac{-1+\Delta_1-y}{\Delta_1}, \quad y\in[-1, -1+\Delta_1].
\end{equation}
Letting $f(y)=f(-1+\Delta_1-\tau\Delta_1)=\overline{f}(\tau)$, then,  (\ref{ode4f}) becomes 
\begin{equation}
\overline{\epsilon}\overline{f}^{iv}-(\overline{f}\overline{f}'''-\overline{f}'\overline{f}'')=0,  \label{olf}
\end{equation}
where $\overline{\epsilon}=\frac{\epsilon}{\Delta_1}$. The boundary conditions to be satisfied by (\ref{olf}) are
\begin  {equation}
\overline{f}(0)=0,\quad  \overline{f}(1)=a, \quad \overline{f}'(1)=0. \label{olbc}
\end{equation}
Since $\overline{\epsilon}\rightarrow0$ as $\epsilon\rightarrow0$, (\ref{olf}) subject to (\ref{olbc})  is still a singular perturbation problem. \\
$\mathbf{(1)}$ Outer solution

Setting $\overline{\epsilon}=0$, the reduced equation is 
\begin{equation}
\overline{f}\overline{f}'''-\overline{f}'\overline{f}''=0, \label{f-}
\end{equation}
satisfying the boundary condition $\overline{f}(0)=0, \overline{f}(1)=a$ and $\overline{f}'(\tau)>0$ for all $\tau$. Equation (\ref{f-}) may have three possible solution:  $\sigma \tau$, $a \sin\frac{\pi}{2} \tau$ and $a \sinh (\ln (\frac{1+\sqrt{5}}{2})\tau)$. By the proof of  \cref{prop:3-7}(c)  for the type $II$ solution, we know that $\frac{1}{2}b(y_1+1)<\gamma<\xi_0$, then $g^{iv}(\xi)<0$ in $(0, \frac{1}{2}b(y_1+1))$, thus $\overline{f}^{iv}(\tau)<0$ in $(0,  1)$.  Hence, trigonometric functions and hyperbolic functions can be excluded. The outer solution is  
\begin{equation}
\overline{f}(\tau)=\sigma \tau+\cdots,
\end{equation}
where $\sigma$ can be  determined  by matching.\\
$\mathbf{(2)}$ Inner solution

The lower wall of the channal is with suction, hence, we introduce a stretching variable $x^{*}=\frac{1-\tau}{\overline{\epsilon}}$. 
Letting $\overline{f}(\tau)=\widehat{f}(x^{*})$, then, (\ref{olf}) becomes 
\begin{equation}
\widehat{f}''''+\widehat{f}\widehat{f}'''-\widehat{f}'\widehat{f}''=0.   \label{wdf}
\end{equation}
The conditions at point $x^*=0$ are $\widehat{f}(0)=a$ and $\widehat{f}'(0)=0$.\\
The inner solution can be expanded as:
\begin{equation}
\widehat{f}(x^*)=a+\overline{\epsilon}\widehat{f}_1(x^*)+\cdots.         \label{wdfexp}
\end{equation}
Substituting (\ref{wdfexp}) into (\ref{wdf}) and collecting the terms of $O(\overline{\epsilon})$, we can obtain the equation of $\widehat{f}_{1}(x^*)$
\begin{equation}
\widehat{f}_1^{iv}+a\widehat{f}_1'''=0,    \label{wdf1}
\end{equation}
satisfying $\widehat{f}_1(0)=\widehat{f}_1'(0)=0$. Then, the expression of $\widehat{f}_{1}(x^*)$ is
\begin{equation}
\widehat{f}_1(x^*)=b_1(e^{-ax^*}+ax^*-1)+b_2{x^*}^2,
\end{equation}
where $b_1$ and $b_{2}$ will be determined by matching.
Hence, the inner solution becomes 
\begin{equation}
\widehat{f}(x^*)=a+\overline{\epsilon}(b_1(e^{-ax^*}+ax^*-1)+b_2{x^*}^2)+\cdots. \label{wdfx*}
\end{equation}
Meanwhile, assume that the expression of outer solution can be written as
\begin{equation}
\overline{f}(\tau)=\sigma \tau+\overline{\epsilon}\overline{f}_1(\tau)+\overline{\epsilon}^2\overline{f}_2(\tau) +\cdots.  \label{olftau}
\end{equation}
Substituting $(\ref{olftau})$ into $(\ref{olf})$ yields that $\overline{f}_1(\tau)$ satisfies 
\begin{equation}
\tau\overline{f}_1'''-\overline{f}_1''=0.
\end{equation}
The correspongding condition is $\overline{f}_1(0)=0$. Then, the expression of $\overline{f}_1(\tau)$ is 
\begin{equation}
\overline{f}_1(\tau)=\frac{1}{6}c_1\tau^3+d_1\tau,
\end{equation}
where $c_{1}$ and $d_{1}$ are constants.
The outer solution (\ref{olftau}) expressing in terms of inner variable $x^{*}$ is
\begin{align}
\overline{f}(\tau)&=\sigma(1-\overline{\epsilon}x^*)+\overline{\epsilon}(c_1(1-\overline{\epsilon}x^*)^3+d_1(1-\overline{\epsilon}x^*))+\cdots {}\nonumber\\
{}&=\sigma+\overline{\epsilon}(-\sigma x^*+c_1+d_1)+\cdots.    \label{olfx*}
\end{align}

\noindent Matching the inner solution (\ref{wdfx*}) with the outer solution (\ref{olfx*}) gives $\sigma=a$, $b_1=-1$, $b_2=0$ and  $c_1+d_1=0$.
Following the analysis in \cite{lu1999matched}, we know that $\overline{f}_{i}(\tau)$ are all linear, $i=1,2,\cdots$, where $c_{1}=0$ and $d_{1}=1$. Hence, $\overline{f}(\tau)$ can be written as 
\begin{equation}
\overline{f}(\tau)\sim\theta(\overline{\epsilon})\tau,
\end{equation}
where $\theta(\overline{\epsilon})=a+d_1\overline{\epsilon}+d_2\overline{\epsilon}^2+\cdots$ and $\theta(\overline{\epsilon})\rightarrow a$ as $\overline{\epsilon}\rightarrow0$.
 The inner solution has exponentially small terms and outer solution has to be more precise, we assume that $\overline{f}(\tau)$ is as follow:
\begin{equation}
\overline{f}(\tau)=\theta(\overline{\epsilon})\tau+\sum_{i=1}^\infty\delta_ih_i(\tau),  \label{olfh}
\end{equation}
where $\delta_i=\delta_i(\overline{\epsilon})=o(\overline{\epsilon}^n)$ and $\delta_{i+1}<<\delta_i$ for all positive integers $n$ and $i$.
(\ref{olfh}) is valid in the small neighborhood of the turning point $y_1=-1+\Delta_1$.
Substituting (\ref{olfh}) into (\ref{olf}) and collecting the terms of  $O(\delta_{1})$ yield
\begin{equation}
\overline{\epsilon}h_1^{iv}-\theta\tau h_1'''+\theta h_1''=0,
\end{equation}
satisfying the condition $h_{1}(0)=0$. Following the similar analysis \cite{macgillivray1994asymptotic}, one  solution of $h_1$ is $h_{1}(\tau)=\frac{1}{6}\tau^{3}+r_{1}\tau$, where $r_1$ is a constant.
Setting $\delta_{2}=\delta_{1}^{2}$ and collecting the terms of  $O(\delta_{2})$ yield
\begin{equation}
\overline{\epsilon}h_2^{iv}-\theta\tau h_2'''+\theta h_2''+ \frac{\tau^3}{3}=0. \label{olh2}
\end{equation}
Differentiate (\ref{olh2}) and multiply by the integrating factor $e^{{-\frac{\theta}{2\overline{\epsilon}}\tau^{2}}}$,  then, we can obtain
\begin{equation}
h_2^{iv}= -\frac{1}{\overline{\epsilon}}e^{\frac{\theta}{2\overline{\epsilon}}\tau^2}\int_0^r s^2 e^{-\frac{\theta}{2\overline{\epsilon}}s^2}ds-Ce^{\frac{\theta}{2\overline{\epsilon}}\tau^2}, \label{olh24}
\end{equation}
where $C$ is a constant. If we choose $\tau<0$ which is away from zero, $h_{2}^{iv}$ will have exponentially large term. Then, we can choose  $C$ to eliminate the exponentially large term. Evaluating (\ref{olh24}) leads to 
\begin{equation}
h_2^{iv}= -\frac{1}{\overline{\epsilon}}e^{\frac{\theta}{2\overline{\epsilon}}\tau^2}\{-(\frac{2\overline{\epsilon}}{\theta})^{3/2} [\frac{\sqrt{\pi}}{4}-\frac{1}{2}\sqrt{\frac{\theta}{2\overline{\epsilon}}}\vert\tau\vert e^{-\frac{\theta}{2\overline{\epsilon}}\tau^2}+\cdots]\}-Ce^{\frac{\theta}{2\overline{\epsilon}}\tau^2}.   \label{olh24evl}
\end{equation}
Hence, we choose $C=\frac{\sqrt{2\overline{\epsilon}\pi}}{2\theta\sqrt{\theta}}$.
Evaluating (\ref{olh24evl}),  we obtain asymptotic expression 
\begin{equation}
h_2^{iv}\sim\theta^{-1}\tau.
\end{equation}
Hence, the expression for $\tau<0$  is
\begin{equation}
\overline{f}(\tau)=\theta\tau+\delta_1(\frac{\tau^3}{6}+r_1\tau)+\delta_1^2\theta^{-1}(\frac{\tau^5}{5!}+\cdots)+\cdots. \label{olfin}
\end{equation}
Then, expanding  (\ref{fleft}) at the turning point $y_{1}=-1+\Delta_{1}$ yields
\begin{align}
f(y)&\sim \Lambda\sin\frac{\pi}{2-\Delta_1-\Delta_2}(y-(-1+\Delta_1))       {}\nonumber\\
 &{}=-\Lambda\frac{\pi\Delta_1\tau}{2-\Delta_1-\Delta_2}+\frac{\Lambda}{3!}(\frac{\pi\Delta_1\tau}{2-\Delta_1-\Delta_2})^3-\frac{\Lambda}{5!}(\frac{\pi\Delta_1\tau}{2-\Delta_1-\Delta_2})^5+\cdots. \label{fout}
\end{align}

\noindent Comparing the linear term in (\ref{olfin}) and (\ref{fout}),  we can obtain 
\begin{equation}
\Lambda\sim-\frac{a(2-\Delta_1-\Delta_2)}{\pi\Delta_1}, \label{A1}    
\end{equation}
where $\theta\sim a$ is used.\\
Then, comparing the cubic term, we get 
\begin{equation}
\delta_{1} \sim -a(\frac{\pi\Delta_{1}}{2-\Delta_{1}-\Delta_{2}})^{2}.
\end{equation}
Hence, the asymptotic expansion of $\overline{f}(\tau)$ is
\begin{equation}
\overline{f}(\tau)=\theta\tau-a(\frac{\pi\Delta_1}{2-\Delta_1-\Delta_2})^2\frac{\tau^3}{6}+a^2(\frac{\pi\Delta_1}{2-\Delta_1-\Delta_2})^4\theta^{-1}h_2+\cdots.   \label{olff}
\end{equation}
$\mathbf{3)}$ The determination of  $\Delta_1$  and  $\Delta_2$

In this section, we will find the asymptotic relationship between $\Delta_{1}$,  $\Delta_{2}$  and $\epsilon$ by matching near $\tau=1$. From  (\ref{olh24}) with $C=\frac{\sqrt{2\overline{\epsilon}\pi}}{2\theta\sqrt{\theta}}$, we can know  
\begin{equation}
h_2^{iv}= -\frac{1}{\overline{\epsilon}}e^{\frac{\theta}{2\overline{\epsilon}}\tau^2}\int_0^r s^2 e^{-\frac{\theta}{2\overline{\epsilon}}s^2}ds-\frac{\sqrt{2\overline{\epsilon}\pi}}{2\theta\sqrt{\theta}}e^{\frac{\theta}{2\overline{\epsilon}}\tau^2}=-\frac{\sqrt{2\overline{\epsilon}\pi}}{\theta\sqrt{\theta}}e^{\frac{\theta}{2\overline{\epsilon}}\tau^2}+\frac{1}{\theta}\tau+\cdots.   \label{olh2f}
\end{equation}
Then, from (\ref{olff}) and (\ref{olh2f}), we can have 
\begin{align}  \label{olfmattau}
\frac{d^4\overline{f}}{d\tau^4}&=\delta_1^2h_2^{iv}(\tau) \\
&{}=-a^{1/2}\pi^{9/2}\frac{\Delta_1^{7/2}}{(2-\Delta_1-\Delta_2)^4}\sqrt{2\epsilon}e^{\frac{\theta}
{2\overline{\epsilon}}\tau^2}+a^2(\frac{\pi\Delta_1}{2-\Delta_1-\Delta_2})^4\frac{1}{\theta}\tau+\cdots. \nonumber
\end{align}
The outer solution (\ref{olfmattau}) expressing in the terms of inner variable $x^{*}$ is 
\begin{align}  \label{olfmatx*}
\frac{1}{\overline{\epsilon}^4}\frac{d^4\overline{f}}{dx^{*4}}=&-a^{1/2}\pi^{9/2}\frac{\Delta_1^{7/2}}{(2-\Delta_1-\Delta_2)^4}\sqrt{2\epsilon}e^{\frac{\theta\Delta_1}{2\epsilon}}e^{\frac{\theta\overline{\epsilon}}{2}x^{*2}}e^{-\theta x^*} \nonumber \\  
&+a^2(\frac{\pi\Delta_1}{2-\Delta_1-\Delta_2})^4\frac{1}{\theta}(1-\overline{\epsilon}x^*)+\cdots.            
\end{align}
Differentiate (\ref{wdfx*}) four times gives
\begin{equation}
\frac{1}{\overline{\epsilon}^4}\frac{d^4\overline{f}}{dx^{*4}}=-\frac{a^4}{\overline{\epsilon}^3}e^{-ax^*}+\cdots.\label{wdfmatx*}
\end{equation}
Comparing (\ref{olfmatx*}) and (\ref{wdfmatx*}) suggests that  the overlap domain must satisfy the conditions: $\frac{\overline{\epsilon}\theta}{2}x^{*2}<<1$ and $x^{*2}>>1$.
 It is obvious that
\begin{equation}
-a^{1/2}\pi^{9/2}\frac{\Delta_1^{7/2}}{(2-\Delta_1-\Delta_2)^4}\sqrt{2\epsilon}e^{\frac{\theta\Delta_1}{2\epsilon}}\sim-\frac{a^4}{\overline{\epsilon}^3}.
\end{equation}
Finally, setting $\theta\sim a+\overline{\epsilon}$, we obtain the asymptotic relationship:
\begin{equation}
\frac{\Delta_1}{\epsilon}e^{a\frac{\Delta_1}{\epsilon}}= \frac{a^7(2-\Delta_1-\Delta_2)^8}{2e\pi^9\epsilon^8}.  \label{Delta1}
\end{equation}
$\mathbf{4)}$ Asymptotic solution  near $y=y_2$

In order to analyze the asymptotic behaviour  near $y=y_2$, we also introduce a  variable transformation
\begin{equation}
\eta=\frac{y-1+\Delta_2}{\Delta_2},   \quad  y\in[1-\Delta_2, 1].
\end{equation}
Letting $f(y)=f(1-\Delta_2+\eta\Delta_2)=\tilde{f}(\eta)$, then, (\ref{ode4f}) becomes 
\begin{equation}
\tilde{\epsilon}\tilde{f}^{iv}+(\tilde{f}\tilde{f}'''-\tilde{f}'\tilde{f}'')=0,  \label{tlf}
\end{equation}
where $\tilde{\epsilon}=\frac{\epsilon}{\Delta_2}$. The boundary conditions to be satisfied by (\ref{tlf}) are
\begin  {equation}
\tilde{f}(0)=0, \quad \tilde{f}(1)=1, \quad \tilde{f}'(1)=0, \label{tlbc}
\end{equation}
$\tilde{\epsilon}\rightarrow0$ as $\epsilon\rightarrow0$, but there is no boundary layer near  $\eta=1$ (or $y=1$), hence, (\ref{olf}) and (\ref{olbc})  form a regular perturbation problem.
Setting $\tilde{\epsilon}=0$, the reduced equation is 
\begin{equation}
\tilde{f}\tilde{f}'''-\tilde{f}'\tilde{f}''=0 
\end{equation}
satisfying the boundary condition $(\ref{tlbc})$. 
The corresponding  solution  is 
\begin{equation}
\tilde{f}(\eta)=\sin \frac{\pi}{2}\eta.
\end{equation}
Since there is no boundary layer near the upper wall of the channel,  we expand $\tilde{f}(\eta)$ at the point $\eta=0$
\begin{equation}
\tilde{f}(\eta)=\frac{\pi}{2}-\frac{1}{3!}(\frac{\pi}{2}\eta)^3+\frac{1}{5!}(\frac{\pi}{2}\eta)^5+O(\tilde{\epsilon}).  \label{tlfexp}
\end{equation}
Then, expand (\ref{fright}) at the turning point $y_2=1-\Delta_2$
\begin{align}
f(y)&\sim-\Lambda\sin\frac{\pi}{2-\Delta_1-\Delta_2}(y-(1-\Delta_2))       {}\nonumber\\
     &{}=-\Lambda\frac{\pi\Delta_2\eta}{2-\Delta_1-\Delta_2}+\frac{\Lambda}{3!}(\frac{\pi\Delta_2\eta}{2-\Delta_1-\Delta_2})^3-\frac{\Lambda}{5!}(\frac{\pi\Delta_2\eta}{2-\Delta_1-\Delta_2})^5+\cdots. \label{fyexp}
\end{align}
Comparing the linear term in (\ref{tlfexp}) and (\ref{fyexp}):  $\frac{\pi}{2}\sim-\Lambda\frac{\pi\Delta_2}{2-\Delta_1-\Delta_2}$, then we can obtain 
\begin{equation}
\Lambda\sim-\frac{2-\Delta_1-\Delta_2}{2\Delta_2}. \label{A2}
\end{equation}
From  (\ref{A1}) and (\ref{A2}), the relationship between $\Delta_1$ and $\Delta_2$ is obvious: 
\begin{equation}
\frac{\Delta_{2}}{\Delta_{1}}=\frac{\pi}{2a}.  \label{Delta2}
\end{equation}

\subsection{Asymptotic solution of type $III$}
The numerical solution for type $III$ in \cref{type3vu} shows that, as  $R\rightarrow\infty$, the flow should consist of an inviscid core and a  thin boundary layer near the lower wall.  Both  transverse and streamwise velocities rapidly decay  and then the streamwise velocity  rapidly increases near the lower wall for type $III$ solution while only streamwise velocity rapidly decays for type $I$ and type $II$ solutions. Therefore, it is reasonable to expect that the high Reynolds number structure of the flow can be determined by boundary layer theory near the lower wall. Further, in this case we expect from numerical results that only two boundary conditions at the upper wall $(y=1)$ are satisfied by the reduced problem. This makes the construction much harder than that of type $I$ solution. We expand $k$ as (\ref{kexp}) and  $f$ as follow
\begin{equation}
f(y)=f_0(y)+h_0(\eta)+\epsilon(f_1(y)+h_1(\eta))+\epsilon^2(f_2(y)+h_2(\eta))+\cdots,      \label{fexp3}
\end{equation}
where $\eta=\frac{1+y}{\epsilon}$ is a stretching transformation near the lower wall dimensionless height $y=-1$ and $h_i(\eta)$, $ i=0,1, 2\cdots$ are boundary layer functions. 
By substituting (\ref{fexp3}) into (\ref{bc}), the  boundary conditions become
\begin{align}
&f_0|_{y=1}=1, \quad f'_{0}|_{y=1}=0,   \label{bcf02}\\
&h_0|_{\eta=0}=a-f_0|_{y=-1}, \quad \dot{h_0}|_{\eta=0}=0, \label{bch02}\\
&f_i|_{y=1}=0,\quad f'_{i}|_{y=1}=0,\quad i=1,2,\cdots,  \label{bcfi2} \\
&h_i|_{\eta=0}=-f_i|_{y=-1},\quad \dot{h_i}|_{\eta=0}=-f'_{i-1}|_{y=-1},\quad i=1,2,\cdots,  \label{bchi2}
\end{align}
where $\dot{h_0}$ denotes the derivative of $h_0$ with respect to  $\eta$.
Substituting (\ref{fexp3}) and (\ref{kexp}) into (\ref{odef}) and collecting the terms of $O(1)$, we can obtain the equation of $f_0$ (same as (\ref{f0})):
\begin{equation}
f_0 f_0''-f_0'^2=k_0,  \label{f03}
\end{equation}
satisfying boundary conditions (\ref{bcf02}) (different from (\ref{bcf0})).
\noindent Similarly, collecting the terms of $O(\epsilon^{-2})$, we can obtain the equation of $h_0$:
\begin{equation}
\dddot{h_0}+(h_0+f_0(-1))\ddot{h_0}-\dot{h_0}^2=0,  \label{h0}
\end{equation}
satisfying boundary conditions (\ref{bch02}).

\noindent One expression of  $f_0$ with  the boundary conditions  (\ref{bcf02}) is 
\begin{equation}
f_0=\cos (by-b), \label{f02}
\end{equation}
where $b$ is an undetermined parameter and we denote $f_0(-1)=\cos 2b$ as $\beta$. We shall determine $\beta$ such that equation $(\ref{h0})$ subject to boundary conditions (\ref{bch02}) has a boundary layer solution. Usually we request a boundary layer function to tend to zero as $\eta\rightarrow\infty$. However for problem (\ref{h0}) with (\ref{bch02}) such a solution may not exist. A rigorous proof is highly nontrivial, we will report it in a forthcoming  paper. For the purpose of the construction of the first order asymptotic solution here in the paper, it is enough to request a boundary layer function $h_0(2/\epsilon)\rightarrow0$ (or much smaller than $O(\epsilon)$) when $\epsilon$ is sufficiently small. It is obvious that $h_0(\eta)=a-\beta$ and $(a-\beta)e^{-\beta \eta}$ are two  solutions of $(\ref{h0})$, but the former is not a boundary layer function and the latter doesn't satisfy $ (\ref{bch02})$.  It is hardly possible, however,   to obtain any other explicit solution for the nonlinear equation (\ref{h0})  with  (\ref{bch02}). We thus make use of both analytic and numerical tools  to predict $\beta$. 

Next, we shall show that $\beta<0$ is impossible.
\begin{proposition}\label{lem:1-1}
	Let $h_0(\eta)$ be a boundary layer function solution of  (\ref{h0}) and (\ref{bch02}) in $[0,2/\epsilon)$, then we can have:
	\begin{itemize}
		\item[(a)]  If $h_0''(\eta_1)>0$ for some $\eta_1 \geq0$, then $h_0''(\eta_1)>0$ for all $\eta \geq \eta_1$.
                 \item[(b)] There holds that $h_0'(\eta)\leq0$ for all $\eta\geq0$.                  \item[(c)] There holds that $h_0'(\eta)<0$ for all $\eta>0$.
                 \item[(d)] There exists some $\eta_2>0$ such that $h_0''(\eta)>0$ for all $\eta\geq \eta_2$.
                 \item[(e)]  $\beta<0$ is impossible.
        \end{itemize}
\end{proposition}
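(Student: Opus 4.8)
The plan is to extract everything from one observation: equation (\ref{h0}), with $\beta:=f_0(-1)$, can be read as a \emph{first order linear} equation for $h_0''$, namely $(h_0'')'+(h_0+\beta)h_0''=(h_0')^2\ge 0$. Multiplying by the integrating factor $\mu(\eta):=\exp\!\big(\int_0^\eta(h_0(s)+\beta)\,ds\big)>0$ turns this into $\big(\mu h_0''\big)'=\mu\,(h_0')^2\ge 0$, so $\mu h_0''$ is nondecreasing on the interval of existence. Part (a) is then immediate: if $h_0''(\eta_1)>0$ then $\mu(\eta)h_0''(\eta)\ge\mu(\eta_1)h_0''(\eta_1)>0$ for all $\eta\ge\eta_1$, and since $\mu>0$ this gives $h_0''>0$ on $[\eta_1,\infty)$. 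For the remaining parts I would repeatedly use that a boundary layer function is bounded with $h_0(\eta)\to 0$ (interpreting the relevant solution on $[0,\infty)$ as $\epsilon\to 0$); this is exactly what rules out the unbounded-growth scenarios. I also use $h_0(0)=a-\beta$ and $h_0'(0)=0$, so $h_0(0)+\beta=a>0$.

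For (b) I would argue by contradiction. If $h_0'(\eta_0)>0$ for some $\eta_0>0$, let $\alpha$ be the last zero of $h_0'$ before $\eta_0$ (it exists since $h_0'(0)=0$ and $h_0'>0$ near $\eta_0$); then $h_0'\ge 0$ immediately to the right of $\alpha$, so $h_0''(\alpha)\ge 0$, and since $\mu h_0''$ is nondecreasing we get $h_0''\ge 0$ on $[\alpha,\infty)$. Hence $h_0'$ is nondecreasing there and stays $\ge h_0'(\eta_0)>0$, forcing $h_0$ to grow at least linearly — contradicting boundedness. Part (c) sharpens this: if $h_0'(\eta_0)=0$ for some $\eta_0>0$, then by (b) the function $h_0'$ attains a maximum at the interior point $\eta_0$, so $h_0''(\eta_0)=0$; monotonicity of $\mu h_0''$ again forces $h_0''\ge 0$ on $[\eta_0,\infty)$, so $h_0'$ is nondecreasing, nonpositive, and zero at $\eta_0$, whence $h_0'\equiv 0$ on $[\eta_0,\infty)$. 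Then $h_0$ is constant there, and by uniqueness for the third order equation (\ref{h0}) with data $(h_0,h_0',h_0'')=(a-\beta,0,0)$ it must equal the constant $a-\beta$ on the whole interval — which is not a boundary layer function, a contradiction.

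For (d), since (a) propagates positivity of $h_0''$, it suffices to find one point where $h_0''>0$. If $h_0''\le 0$ on $[0,\infty)$, then $h_0'$ is nonincreasing from $h_0'(0)=0$; either $h_0'\equiv 0$, so $h_0$ is the (already excluded) constant $a-\beta$, or $h_0'(\eta_1)<0$ for some $\eta_1$, in which case $h_0'\le h_0'(\eta_1)<0$ for $\eta\ge\eta_1$ and $h_0(\eta)\to-\infty$ — again contradicting the boundary layer condition. So $h_0''$ is positive somewhere, and (a) finishes the proof.

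Finally, for (e) I would assume $\beta<0$ and combine the previous parts with the observation that now $\mu(\eta)\to 0$: since $h_0(s)+\beta\to\beta<0$, the exponent $\int_0^\eta(h_0+\beta)\,ds\to-\infty$. By (d) choose $\eta_2$ with $h_0''(\eta_2)>0$ and set $c_0:=\mu(\eta_2)h_0''(\eta_2)>0$; then $\mu(\eta)h_0''(\eta)\ge c_0$ for $\eta\ge\eta_2$, so $h_0''(\eta)\ge c_0/\mu(\eta)\to+\infty$, and integrating gives $h_0'(\eta)\to+\infty$, which contradicts $h_0'\le 0$ from (b). The step I expect to be the real obstacle is not any single estimate but pinning down the working meaning of ``boundary layer function''—that it be bounded, tend to $0$, and in particular be the \emph{nonconstant} solution, so that the constant $a-\beta$ is genuinely ruled out in (c) and (d); once that is in place, the identity $\big(\mu h_0''\big)'=\mu(h_0')^2\ge 0$ does essentially all the work.
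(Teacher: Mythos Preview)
Your proof is correct and follows essentially the same approach as the paper: both rest on the integrating-factor identity $\big(\mu h_0''\big)'=\mu(h_0')^2\ge 0$ with $\mu(\eta)=\exp\!\int_0^\eta(h_0+\beta)$, and then derive each part by the same contradictions with the boundary-layer decay. Your treatment of (e) is in fact a bit more streamlined than the paper's---you read off $h_0''(\eta)\ge c_0/\mu(\eta)\to+\infty$ directly from the monotonicity of $\mu h_0''$ and $\mu\to 0$, whereas the paper rewrites $h_0''$ via the variation-of-constants formula and then integrates---but the underlying idea and the final contradiction ($h_0'\to+\infty$ versus $h_0'\le 0$) are identical.
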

	
\begin{proof}
	(a) Let $h_2(\eta)=h_0''(\eta)$ for all $\eta\geq 0$, by equation (\ref{h0}), then $h_2'+(h_0+\beta)h_2=(h_0')^2\geq0$ for all $\eta\geq0$, which implies that $\displaystyle \left(e^{\int_{\eta_1}^\eta (h_0(t)+\beta) dt} h_2(\eta)\right)'\geq0$ for all $\eta\geq \eta_1$. So we get $\displaystyle e^{\int_{\eta_1}^\eta (h_0(t)+\beta) dt} h_2(\eta)\geq h_2(\eta_1)=h_0''(\eta_1)>0$ for all $\eta\geq \eta_1$, which implies that $h_0''(\eta)=h_2(\eta)>0$ for all $\eta\geq \eta_1$.
	
        (b) If not, that is, there exists some $\lambda_0>0$ such that $h_0'(\lambda_0)>0$. Since $h_0'(0)=0$, then there exists some $b\in[0,\lambda_0)$ such that $h_0'(b)=0$ and $h_0'(\eta)>0$ for all $\eta\in(b,\lambda_0]$. Then $h_0''(b)>0$. By the result of part (a), then $h_0''(\eta)>0$ for all $\eta\geq b$, which implies that $h_0'(\eta)$ is increasing in $[b,2/\epsilon)$. So $h_0'(\eta)\geq h_0'(\lambda_0):=\sigma>0$ for all $\eta\geq \lambda_0$, which implies that $h_0(\eta)-h_0(\lambda_0)\geq \sigma(\eta-\lambda_0)$ for all $\eta\geq \lambda_0$. Since $\sigma>0$, by taking $\eta\rightarrow2/\epsilon$ sufficiently large (or $\epsilon$ sufficiently small), then $h_0(2/\epsilon)$ is large, and then is in contradiction to a boundary layer function.  

         (c) If not, by the result of part (b), then there exists some $\lambda_1>0$ such that $\displaystyle 0=h_0'(\lambda_1)=\sup_{\eta\geq0}\ h_0'(\eta)$, which implies that $h_0''(\lambda_1)=0$. By the uniquenes theorem of solution to ODE, then $h_0=h_0(\lambda_1)$ in  $[0,2/\epsilon)$,  is in contradiction to a boundary layer function. 
	
         (d) If not, that is, $h_0''(\eta)\leq0$ for all $\eta\geq0$, then $h_0'(\eta)$ is non-increasing on $[0,2/\epsilon)$. By the result of part (c), then $h_0'(\eta)\leq h_0'(1)<0$ for all $\eta\geq 1$, which implies that $h_0(\eta)-h_0(1)\leq h_0'(1)(\eta-1)$ for all $\eta\geq 1$. Since $h_0'(1)<0$, by taking $\eta\rightarrow2/\epsilon$ sufficiently large, then $h_0(2/\epsilon)$ is negatively large, is  in contradiction to a boundary layer function. 
	
        (e) If not, that is  $\beta<0$, since $h_0(2/\epsilon)$ is sufficiently  small as $\epsilon$ is sufficiently small,  then when $\eta<2/\epsilon$ is sufficiently large (say $\eta> \eta_3> \eta_2$), we have $\beta+h_0(\eta) <0$ for all $\eta > \eta_3$.
From (\ref{h0}), we have 
\begin{equation}
h_0''(\eta)=h_0''(\eta_3)e^{-\int_{\eta_3}^\eta(h_0(t)+\beta)dt}+\int_{\eta_3}^\eta h_0'^2(s)e^{-\int_s^\eta(h_0(t)+\beta)dt}ds. \label{h01}
\end{equation}
We mark the right most term as $B(\eta)$.
Integrating  (\ref{h01}) from $\eta_{3}$ to $\eta$, we can obtain 
\begin{equation}
h_{0}'(\eta)=h_{0}'(\eta_{3})+h_{0}''(\eta_{3}) \int_{\eta_{3}}^{\eta}e^{-\int_{\eta_3}^x(h_{0}(t)+\beta)dt}dx+\int_{\eta_{3}}^{\eta}B(x)dx.   \label{h02}
\end{equation}
Fixed $\eta_{3}$,  it is obvious that the first term at the right hand of (\ref{h02}) is a negative constant and the third term is always positive. Since $\beta+h_{0}(\eta) < 0$, by the results of parts $(a)$ and  $(d)$, $h_0''(\eta_3)>0$, then we have $h_{0}''(\eta_{3}) \int_{\eta_{3}}^{\eta}e^{-\int_{\eta_3}^x(h_{0}+\beta)dt}dx\geq h_{0}''(\eta_{3})(\eta-\eta_{3})$. Hence, $h_{0}'(\eta)$ is sufficiently large as $\eta$ is close to $2/\epsilon$, then $h_0(2/\epsilon)$ can not be close to $0$, in  contradiction to a boundary layer function.
\end{proof}

Although we can prove $\beta\geq0$, it is still difficult to determine $\beta$ analytically. We thus determine $\beta$ numerically. Gradually increasing $R$ and comparing the type $III$ numerical solution of (\ref{odef}) and (\ref{bc}) for a given boundary value $a$ with the solution of the reduced problem as in  expression (\ref{f02}), we can numerically estimate $\beta$. The results are summarised in \cref{tab:a-beta}. Then, it is obvious that $b=\frac{\cos^{-1}\beta}{2}$.  Then, we can  solve the boundary layer equation (\ref{h0}) subject to (\ref{bch02}) numerically.  The numerical results for $h_0(\eta)$ show that $h_0(\eta)\rightarrow0$ as $\eta\rightarrow2/\epsilon$.  Finally, the asymptotic solution up to $O(\epsilon)$ is $f(y)=f_0(y)+h_0(\eta)+O(\epsilon)$. This will be compared with the  numerical solution in  next section.

\begin{table}[tbhp]
{\footnotesize
 \caption{The numerical results of $\beta$ at  different given boundary values $a$ for $R=1500$. }\label{tab:a-beta}
\begin{center}
  \begin{tabular}{|c|l|l|l|l|l|l|l|} \hline
   $a$  &0.9  &0.8  &0.7  &0.6  &0.5  &0.4  &0.3 \\ \hline
   $\beta$&0.0889 &0.0783 &0.0672 &0.0551 &0.0417 &0.0264 &0.0079 \\ \hline
  \end{tabular}
\end{center}
}
\end{table}

\section {Comparison of numerical and asymptotic solutions}
\label{compa}
Numerical solutions for (\ref{ode1}) and (\ref{bc}) can be readily obtained by MATLAB boundary value problem solver bvp4c.  Comparison of the asymptotic solution and numerical solution will be shown in the following  Tables. 

For the type $I$ solution, we will make comparison between numerical and asymptotic solution for $f'(y)$ so as to see the accuracy of the type $I$ asymptotic solution constructed in $(\ref{type1})$. From Table \ref{tab:typeI}, it can be seen that the asymptotic solution is matched well with  the numerical solution.

\begin{table}[tbhp]
{\footnotesize
 \caption{Comparison between numerical and asymptotic results for $f'(y)$ at  $a=0.8$ with $R=100$,  $R=200$ and $R=300$.}\label{tab:typeI}
\begin{center}
  \begin{tabular}{|r|c|c|c|c|c|c|} \hline
  $f'(y)$  &\multicolumn{2}{c}{$R=100$} &\multicolumn{2}{c}{$R=200$} &\multicolumn{2}{|c|}{$R=300$}\\ \hline
$y$  & Numeric &Asymptotic  & Numeric & Asymptotic & Numeric & Asymptotic\\ \hline
-1.0    &0.0000 &0.0000               &0.0000&0.0000               &0.0000&0.0000\\ \hline
-0.8    &0.1780 &0.1781               &0.1771 &0.1771              &0.1768&0.1768\\ \hline
-0.6    &0.1603&0.1603               &0.1593 &0.1593               &0.1590&0.1590\\ \hline
-0.4    &0.1417&0.1417               &0.1409&0.1409               &0.1406&0.1406\\ \hline
-0.2    &0.1226&0.1226               &0.1219&0.1219               &0.1217&0.1217\\ \hline
-0.0    &0.1031&0.1030               &0.1023&0.1023               &0.1022&0.1022\\ \hline
0.2     &0.0829&0.0829               &0.0824&0.0824               &0.0823&0.0823\\ \hline
0.4     &0.0625& 0.0625              &0.0621&0.0621               &0.0620&0.0620\\ \hline
0.6     &0.0418&0.0418               &0.0416&0.0416               &0.0415&0.0415\\ \hline
0.8     &0.0209&0.0209               &0.0208&0.0208               &0.0208&0.0208\\ \hline
1.0     &0.0000&0.0000               &0.0000&0.0000               &0.0000&0.0000\\ \hline  
\end{tabular}
\end{center}
}
\end{table}

For the type $II$ solution,  since the turning points  $y_1=-1+\Delta_{1}$ and $y_2=1-\Delta_{2}$ are unknown a priori, getting the values of them is very important and difficult. We will contrast  numerical and asymptotic results for the turning points. The asymptotic results of $\Delta_{1}$ and $\Delta_{2}$ are from (\ref{Delta1}) and (\ref{Delta2}).   From Table \ref{tab:typeII}, it can be seen   that the error between the numerical and asymptotic results of the turning points is decreasing with the increasing  $R$ and that $|\Delta_{1}|$ and $|\Delta_{2}|$ get smaller and smaller as $R$ increases. These verify  our constructing process of the type  $II$ asymptotic solution in previous section.

\begin{table}[tbhp]
{\footnotesize
\caption{Comparison between numerical and asymptotic results for the turning points  $y_1=-1+\Delta_{1}$  and  $y_2=1-\Delta_{2}$ at  $a=0.8$. } \label{tab:typeII}
\begin{center}
\begin{tabular}{|c|c|c|c|c|}\hline
{\multirow{2}{*}{$R$}}    &\multicolumn{2}{|c|}{$y_{1}=-1+\Delta_{1}$}   &\multicolumn{2}{|c|}{$y_{2}=1-\Delta_{2}$}\\
\cline{2-5} 
              & Numeric  & Asymptotic   & Numeric  & Asymptotic  \\ \hline
100               &-0.7449      &-0.7315          &0.5457        &0.4728\\ \hline
200               &-0.8263      &-0.8227          &0.6753        &0.6519\\ \hline
400               &-0.8914      &-0.8921          &0.7868        &0.7959\\ \hline
600               &-0.9203      &-0.9202          &0.8483        &0.8434\\ \hline
800               &-0.9363      &-0.9363          &0.8783        &0.8750\\ \hline
\end{tabular}
\end{center}
}
\end{table}

For the type $III$ solution, we will compare the  numerical solution with the type $III$ asymptotic solution. Because of the complexity of the boundary layer problem (\ref{h0}) and  (\ref{bch02}), we compute the asymptotic solution $f(y)=f_0(y)+h_0(\eta)+O(\epsilon)$ in the following way:  $f_0(y)$  is obtained from (\ref{f02}) and $\beta$ or $b$ is estimated from numerical solution of (\ref{odef}) and (\ref{bc}), and $h_{0}$ is obtained numerically based on solving (\ref{h0}) and (\ref{bch02}). The Table \ref{tab:typeIII} shows that the asymptotic solution matches well with the numerical solution for large Reynolds numbers. 

\begin{table}[tbhp]
{\footnotesize
\caption{Comparison between numerical and asymptotic solutions for $f(y)$ at $R=800$ with $a=0.652$,   $a=0.748$ and $a=0.876$.} \label{tab:typeIII}
\begin{center}
\begin{tabular}{|r|c|c|c|c|c|c|} \hline
$f(y)$  &\multicolumn{2}{|c|}{$a=0.652$} &\multicolumn{2}{|c|}{$a=0.748$} &\multicolumn{2}{|c|}{$a=0.876$}\\ \hline
$y$  & Numeric &Asymptotic  & Numeric & Asymptotic & Numeric & Asymptotic\\ \hline
-1.0   		&0.6520&0.6520		&0.7480&0.7480	&0.8760  &0.8760\\ \hline
-0.6    		&0.3489&0.3462	        &0.3590&0.3516	&0.3711  &0.3586\\ \hline
-0.4    		&0.4866&0.4838	        &0.4948&0.4880       &0.5047  &0.4933 \\ \hline
-0.2    		&0.6131&0.6103	        &0.6194&0.6133	&0.6271  &0.6169\\ \hline
 0.0     		&0.7255&0.7228	        &0.7301&0.7246	&0.7356  &0.7267 \\ \hline
 0.2    		&0.8212&0.8187	        &0.8242&0.8196       &0.8279  &0.8204\\ \hline
 0.4    		&0.8981&0.8959	        &0.8998&0.8960       &0.9019  &0.8960\\ \hline
 0.6    		&0.9543&0.9526	        &0.9550&0.9523       &0.9560  &0.9518 \\ \hline
 0.8    		&0.9885&0.9875	        &0.9887&0.9872       &0.9889  &0.9867\\ \hline
 1.0      		&1.0000&1.0000	       &1.0000&1.0000	&1.0000  &1.0000\\ \hline
\end{tabular}
\end{center}
}
\end{table}

\section{Conclusion}
\label{sec:conclu}

In  this article, we have considered the multiplicity and asymptotics  of similarity solutions for laminar flows in a porous channel with different permeabilities, in particular,  flows permeating from upper wall of the porous channel and exiting from the lower wall. We rigorously prove that there exist three similarity solutions designated as type $I$, type $II$ and type $III$ solutions, and then numerically show that three solutions exist for $R>14.10$. Meanwhile, the asymptotic solution for each of the three types of similarity solutions is constructed for the most  interesting and challenging high Reynolds number case and is also verified numerically. For the type $I$ solution, its streamwise velocity has  an exponentially rapid decay. For the type $II$ solution, there are two turning points and its streamwise velocity also has an exponentially rapid decay. For the type $III$ solution, there exists  an exponentially rapid change not only for its streamwise velocity (decay and then increase)  but also for its transverse velocity (decay).  The reversal flow occurs for both type $II$ and type $III$ solutions.

\section*{Acknowledgments}
The authors would like to thank  Professor  Martin Stynes for many discussions and  suggestions.

\bibliographystyle{siamplain}
\bibliography{references}
\end{document}